\renewcommand{\phi}{\varphi}
\renewcommand{\theta}{\vartheta}
\theoremstyle{plain}
\newtheorem{theorem}{Theorem}[section]
\newtheorem{lemma}[theorem]{Lemma}
\newtheorem{proposition}[theorem]{Proposition}
\newtheorem{corollary}[theorem]{Corollary}
\theoremstyle{definition}
\newtheorem{definition}[theorem]{Definition}
\theoremstyle{remark}
\newtheorem{remark}[theorem]{Remark}
\newtheorem{notation}[theorem]{Notation}
\newtheorem{example}[theorem]{Example}
\DeclareMathOperator{\Ab}{Ab}
\DeclareMathOperator{\age}{age}
\DeclareMathOperator{\Cl}{Cl}
\DeclareMathOperator{\coker}{coker}
\DeclareMathOperator{\diag}{diag}
\let\div\relax
\DeclareMathOperator{\div}{div}
\DeclareMathOperator{\Div}{Div}
\DeclareMathOperator{\GL}{GL}
\DeclareMathOperator{\Hom}{Hom}
\DeclareMathOperator{\id}{id}
\DeclareMathOperator{\rk}{rk}
\DeclareMathOperator{\SL}{SL}
\DeclareMathOperator{\Spec}{Spec}
\newcommand{\C}{\mathbb C}
\newcommand{\Q}{\mathbb Q}
\newcommand{\R}{\mathcal R}
\newcommand{\Z}{\mathbb Z}
\newcommand{\free}{\mathrm{free}}
\newcommand{\tors}{\mathrm{tors}}
\patchcmd{\@setauthors}{\footnotesize}{}{}{}
\patchcmd{\@setauthors}{\MakeUppercase}{}{}{}
\patchcmd{\@setaddresses}{\scshape}{}{}{}
\patchcmd{\contentsnamefont}{\scshape}{\bfseries}{}{}
\patchcmd{\abstract}{\scshape}{\bfseries}{}{}
\patchcmd{\section}{\scshape}{\bfseries}{}{}
\patchcmd{\@secnumfont}{\mdseries}{\bfseries}{}{}
\patchcmd{\@captionheadfont}{\scshape}{\bfseries}{}{}
\title[The class group of a minimal model of a quotient singularity]{The class group of a minimal model\\ of a quotient singularity}
\author{Johannes Schmitt}
\address{Johannes Schmitt, Naturwissenschaftlich-Technische Fakultät, Universität Siegen, Walter-Flex-Straße 3, 57068 Siegen, Germany}
\email{johannes2.schmitt@uni-siegen.de}
\date{\DTMdisplaydate{2024}{03}{15}{-1}}
\subjclass{Primary: 14C20 Divisors, linear systems, invertible sheaves, 13C20 Class groups; Secondary: 14E16 McKay correspondence, 20H20 Other matrix groups over fields}
\keywords{Divisor class groups, linear quotient singularities, minimal models, junior elements}
\begin{document}

\begin{abstract}
  Let \(V\) be a finite\-/dimensional vector space over the complex numbers and let \(G\leq \SL(V)\) be a finite group.
  We describe the class group of a minimal model (that is, \(\Q\)\=/factorial terminalization) of the linear quotient \(V/G\).
  We prove that such a class group is completely controlled by the junior elements contained in \(G\).
\end{abstract}

\maketitle
\tableofcontents

\section{Introduction}
\label{sec:intro}

Let \(V\) be a finite\-/dimensional vector space over \(\C\) and let \(G\leq\GL(V)\) be a finite group.
By the classical theorem of Chevalley--Serre--Shephard--Todd \cite[Théorème~1']{Ser68}, the \emph{linear quotient} \[V/G \coloneqq \Spec \C[V]^G\] of \(V\) by \(G\) is smooth if and only if \(G\) is generated by \emph{reflections}, that is, elements \(g\in G\) with \(\rk (g - \id_V) = 1\).
If \(V/G\) is singular, we also refer to this affine variety as a \emph{quotient singularity}.
Regarding the class group of Weil divisors \(\Cl(V/G)\) of \(V/G\), there is a nowadays well\-/known theorem by Benson \cite{Ben93} which says that \(\Cl(V/G) \cong \Hom(G/K, \C^\times)\), where \(K\leq G\) is the subgroup generated by the reflections contained in \(G\).
Restricting to subgroups \(G\leq\SL(V)\), we will show that the class group of a certain partial resolution of \(V/G\) can be described in a similar way.

The inclusion \(G\leq\SL(V)\) implies that \(G\) cannot contain any reflections and \(V/G\) is a singular variety.
Further, by \cite{BCHM10}, there is a \emph{\(\Q\)\=/factorial terminalization} -- or \emph{minimal model} -- \(\phi:X\to V/G\), that is, a crepant, partial resolution of singularities of \(V/G\) with at most terminal singularities; see below for the precise definition.
\(\Q\)\=/factorial terminalizations of quotient singularities are well studied and, starting with the famous \emph{McKay correspondence} \cite{McK80}, it is often observed that properties of \(\phi:X\to V/G\) are controlled only by the action of \(G\) on \(V\), see for example the survey \cite{Rei02}.
This leads to the expectation that questions regarding the birational geometry of \(V/G\) should be answered by only considering \(G\), see Reid's `principle of the McKay correspondence' \cite[Principle~1.1]{Rei02}.

In this note, we give further evidence of this phenomenon by describing the class group \(\Cl(X)\) of \(X\).
By a version of the McKay correspondence due to Ito and Reid \cite{IR96}, the rank of the free part of this finitely generated abelian group coincides with the number of conjugacy classes of \emph{junior elements} in \(G\); we recall the definition of these distinguished elements of \(G\) below.
In the literature, one further finds a sufficient condition and a full characterization of the freeness of \(\Cl(X)\) by Donten\-/Bury--Wiśniewski \cite[Lemma~2.11]{DW17} and Yamagishi \cite[Proposition~4.14]{Yam18}, respectively.

We study the torsion part \(\Cl(X)^\tors\) of \(\Cl(X)\) and obtain a theorem, which reads similar to Benson's result on \(\Cl(V/G)\).
\begin{theorem}[{= Theorem \ref{thm:clgrp}}]
  \label{thm:intro}
  Let \(G\leq \SL(V)\) be a finite group and let \(H\leq G\) be the subgroup generated by the junior elements contained in \(G\).
  Let \(\phi: X\to V/G\) be a \(\Q\)\=/factorial terminalization of \(V/G\).
  Then we have a canonical isomorphism of abelian groups \[\Cl(X)^\tors \cong \Hom(G/H, \C^\times)\;,\] which is induced by the push\-/forward map \(\phi_\ast: \Cl(X)\to \Cl(V/G)\).
\end{theorem}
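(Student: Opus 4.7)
My plan is to realise $\Cl(X)^{\tors}$ as a subgroup of $\Cl(V/G)$ via the push\-/forward $\phi_{\ast}$ and then identify exactly which characters of $G$ admit a torsion lift. First I would apply Benson's theorem to get the explicit identification $\Cl(V/G) \cong \Hom(G, \C^{\times})$: a character $\chi$ corresponds to the class $D_{\chi}$ such that $\pi^{\ast} D_{\chi} = \div(f)$, where $\pi \colon V \to V/G$ is the quotient and $f \in \C[V]$ is a $\chi$\-/semi\-/invariant (there are no reflection contributions to worry about because $G \leq \SL(V)$). Then I would combine two standard ingredients: on the one hand, the short exact sequence \[ 0 \to K \to \Cl(X) \xrightarrow{\phi_{\ast}} \Cl(V/G) \to 0, \] where $K$ is the subgroup generated by the classes of the exceptional divisors of $\phi$; on the other hand, the Ito--Reid McKay correspondence, which parametrises these exceptional divisors by the junior conjugacy classes of $G$ and computes $\rk \Cl(X)$ as their number. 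Since $\Cl(V/G)$ is finite, comparing ranks forces $K$ to be freely generated by the classes $[E_g]$, one for each junior class $[g]$. Torsion\-/freeness of $K$ then yields the desired injection $\Cl(X)^{\tors} \hookrightarrow \Cl(V/G) \cong \Hom(G, \C^{\times})$.

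The substance of the proof lies in identifying the image of this injection. For each junior $g$ of order $r$, choose coordinates in which $g = \diag(\zeta^{a_1}, \dots, \zeta^{a_n})$ with $\zeta = e^{2\pi i /r}$, $0 \leq a_i < r$ and $\sum a_i = r$. I would then invoke the explicit Ito--Reid description of the divisorial valuation $v_g$ attached to $E_g$: it lifts along the Galois extension $\C(V)/\C(V)^{G}$ (ramified of index $r$ at $E_g$) to the monomial valuation $\tilde v_g$ on $\C(V)$ with $\tilde v_g(x_i) = a_i$. For a character $\chi$ with $\chi(g) = \zeta^{c}$ ($0 \leq c < r$) and a $\chi$\-/semi\-/invariant $f$, every monomial of $f$ has $g$\-/weight $c$, so $\tilde v_g(f) \equiv c \pmod{r}$. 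Now pick $m$ with $\chi^{m} = 1$, put $h \coloneqq f^{m} \in \C[V]^{G}$, and use $m D_{\chi} = \div(h)$ to pass to $X$; this produces $m[\tilde D_{\chi}] = -\sum_g v_g(h) [E_g]$ in $\Cl(X)$ for the strict transform $\tilde D_{\chi}$. A torsion modification $[\tilde D_{\chi}] + \sum b_g [E_g]$ exists precisely when each $b_g = v_g(h)/m = \tilde v_g(f)/r$ is an integer, which by the congruence above holds if and only if $\chi(g) = 1$ for every junior $g$, that is, precisely when $\chi \in \Hom(G/H, \C^{\times})$.

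The main obstacle I anticipate is the careful extraction from the Ito--Reid correspondence of the description of $v_g$ as a monomial valuation in $g$\-/adapted coordinates and the correct handling of the ramification index $r$ between $\C(V)$ and $\C(V)^{G}$; once these are in place, the remaining calculations are bookkeeping about semi\-/invariants and strict transforms. A secondary technical point is verifying that the classes $[E_g]$ are $\Z$\-/linearly independent in $\Cl(X)$, which I would deduce from the Ito--Reid rank formula rather than prove by hand, and ensuring canonicity (as asserted in the theorem) by tracking that the isomorphism is realised as the restriction of $\phi_{\ast}$ rather than depending on any auxiliary choices.
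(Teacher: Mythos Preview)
Your proposal is correct and follows essentially the same route as the paper: inject $\Cl(X)^{\tors}$ into $\Cl(V/G)\cong\Hom(G,\C^\times)$ via the short exact sequence with free kernel $\bigoplus_i\Z E_i$, then use the Ito--Reid monomial valuations together with the congruence $\tilde v_g(f)\equiv c\pmod r$ for a $\chi$\=/semi\-/invariant $f$ with $\chi(g)=\zeta^c$ to identify the image as exactly those $\chi$ trivial on junior elements. The paper packages the valuation--character congruence into a sequence of grading lemmas on $\C[V]^{[G,G]}$ (its Lemmas~\ref{lem:grad1}--\ref{lem:clgrp2}) and the strict\-/transform computation into Proposition~\ref{prop:divfree}, and it establishes the $\Z$\=/linear independence of the $[E_i]$ via a Simis--Trung blow\-/up argument (Corollary~\ref{cor:ses}) rather than an Ito--Reid rank count, but the substance is the same.
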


Combining our result with \cite{IR96} gives a complete description of the class group.
\begin{corollary}[{= Corollary \ref{cor:clgrpfull}}]
  \label{cor:intro}
  With the assumptions in Theorem~\ref{thm:intro}, we have \[\Cl(X) \cong \Z^m \oplus \Hom(G/H, \C^\times)\;,\] where \(m\) is the number of conjugacy classes of junior elements in \(G\).
\end{corollary}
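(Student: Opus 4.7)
The plan is to combine Theorem~\ref{thm:intro} (which handles the torsion part of \(\Cl(X)\)) with the Ito--Reid version of the McKay correspondence \cite{IR96} (which handles the rank), and then to invoke the structure theorem for finitely generated abelian groups.

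First I would verify that \(\Cl(X)\) is finitely generated. Since \(G\leq\SL(V)\) contains no reflections, the singular locus of \(V/G\) has codimension at least two, so the \(\Q\)\=/factorial terminalization \(\phi\) is an isomorphism outside a closed subset of codimension \(\geq 2\) in \(V/G\). Consequently, the push\-/forward \(\phi_\ast:\Cl(X)\to\Cl(V/G)\) is surjective, with kernel generated by the classes of the finitely many prime \(\phi\)\=/exceptional divisors \(E_1,\ldots,E_r\); and the target \(\Cl(V/G)\cong\Hom(G,\C^\times)\) is finite by Benson's theorem. Hence \(\Cl(X)\) is finitely generated.

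By Ito--Reid \cite{IR96}, the number of prime exceptional divisors on any crepant partial resolution of \(V/G\) equals the number \(m\) of conjugacy classes of junior elements in \(G\); combined with the finiteness of \(\Cl(V/G)\), the description of \(\phi_\ast\) above yields \(\rk_{\Z}\Cl(X)=m\). Since Theorem~\ref{thm:intro} identifies \(\Cl(X)^\tors\cong\Hom(G/H,\C^\times)\), and any finitely generated abelian group \(A\) splits (non\-/canonically) as \(A\cong\Z^{\rk_\Z A}\oplus A^\tors\), the claimed decomposition follows at once. All of the genuine content is absorbed in Theorem~\ref{thm:intro} and \cite{IR96}; the only point that needs checking here is that the Ito--Reid divisor count really computes the \(\Z\)\=/rank of \(\Cl(X)\) (and not some a priori different invariant), which is immediate from the exact sequence arising from the description of \(\phi_\ast\) in the previous paragraph.
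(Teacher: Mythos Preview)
Your argument follows the same strategy as the paper: determine the free rank via Ito--Reid, the torsion via Theorem~\ref{thm:intro}, and then apply the structure theorem for finitely generated abelian groups. One step, however, is glossed over. From the fact that \(\ker\phi_\ast\) is \emph{generated} by the classes \([E_1],\dots,[E_m]\) together with the finiteness of \(\Cl(V/G)\), you obtain only \(\rk_{\Z}\Cl(X)\leq m\); the equality you assert requires the \([E_i]\) to be \(\Z\)\=/linearly independent in \(\Cl(X)\), which is not automatic from excision alone. The paper addresses this via Corollary~\ref{cor:ses} (equivalently Remark~\ref{rem:free}), upgrading the excision sequence to a short exact sequence by realizing \(\phi\) as a blow\-/up and invoking \cite{ST88}. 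Once this injectivity is in place, your argument and the paper's coincide.
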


\begin{remark}
  We emphasize that by Corollary~\ref{cor:intro}, the class group of a \(\Q\)\=/factorial terminalization is completely controlled by the group \(G\) itself.
  This agrees well with the mentioned principle of Reid \cite[Principle~1.1]{Rei02}.
\end{remark}

\begin{remark}
  We make a further philosophical observation.
  Recall that by the theorem of Chevalley--Serre--Shephard--Todd, the variety \(V/G\) is smooth if and only if \(G\leq \GL(V)\) is generated by reflections.
  This is mirrored by a theorem of Yamagishi \cite[Theorem~1.1]{Yam18} which generalizes a result of Verbitsky \cite[Theorem~1.1]{Ver00} and says that, if a \(\Q\)\=/factorial terminalization \(X\to V/G\) is smooth, then \(G\leq \SL(V)\) must be generated by junior elements.

  We feel that Theorem~\ref{thm:intro} mirrors Benson's theorem (Theorem~\ref{thm:clgrpVG}) in the same way.
  In both cases the geometry of the linear quotient \(V/G\) is controlled by the reflections contained in \(G\) and the junior elements control the geometry of the \(\Q\)\=/factorial terminalization \(X\to V/G\).
  Still, it appears that this picture is far from complete.
  The theorem of Verbitsky and Yamagishi on the smoothness of \(X\) is not an equivalence and the freeness of the class group also depends in a somewhat convoluted way on the junior elements, see Remark~\ref{rem:clgrp}.
\end{remark}

Notice that the mentioned theorem of Verbitsky and Yamagishi together with our result implies that if \(X\) is smooth, then the class group is free.
This result was already shown in \cite[Lemma~2.11]{DW17} using different methods; a stronger statement appears in \cite[Proposition~4.14]{Yam18}, see also Corollary~\ref{cor:clgrp}.

To prove Theorem~\ref{thm:intro}, we study the Cox ring \(\R(V/G)\) which is graded by \(\Cl(V/G)\).
This directly extends the work of \cite{Yam18}.
The Cox ring \(\R(V/G)\) was introduced by Cox \cite{Cox95} in the context of toric varieties and transferred to a more general setting of birational geometry by Hu and Keel \cite{HK00}.

After fixing the notation and recalling fundamental results in Section~\ref{sec:pre}, we present a correspondence of homogeneous elements in \(\R(V/G)\) and effective divisors on \(V/G\) in Section~\ref{sec:corr}.
We do not claim originality for the results in these sections, but only unify the different sources and transfer them to our setting.
In Section~\ref{sec:grad}, we analyse the grading of the Cox ring \(\R(V/G)\) by \(\Cl(V/G)\) in more detail and finally use this in Section~\ref{sec:clgrp} to prove our theorem.
We close with some small examples in Section~\ref{sec:ex}.

\subsection*{Acknowledgements}
This paper originates from my PhD thesis \cite{Sch23}; I thank my supervisor Ulrich Thiel as well as Alastair Craw for valuable comments and for encouraging me to publish this result separately.
I thank the anonymous referees for helpful comments that improved the presentation in this article and David J.\ Benson for pointing out \cite[Theorem~2.11]{Nak82} in the context of Theorem~\ref{thm:clgrpVG} to me.
This work was supported by the SFB\-/TRR~195 `Symbolic Tools in Mathematics and their Application' of the German Research Foundation (DFG).

\section{Preliminaries}
\label{sec:pre}

Let \(V\) be a finite\-/dimensional vector space over \(\C\) and let \(G\leq \GL(V)\) be a finite group.
Note that we will restrict to subgroups of \(\SL(V)\) after a short general discussion.
Write \(V/G \coloneqq \Spec \C[V]^G\) for the linear quotient, where \(\C[V]^G\) is the invariant ring of \(G\).
The variety \(V/G\) is normal and \(\Q\)\=/factorial, see \cite{Ben93}.

\subsection{The class group of \texorpdfstring{\(V/G\)}{V/G}}
Recall that by a \emph{reflection} (or \emph{pseudo\-/reflection}), we mean an element \(g\in\GL(V)\) with \(\rk(g - \id_V) = 1\).
The class group of \(V/G\) was first described by Benson \cite[Theorem~3.9.2]{Ben93} building on work of Nakajima \cite[Theorem~2.11]{Nak82}.
\begin{theorem}[Benson]
  \label{thm:clgrpVG}
  Let \(G\leq \GL(V)\) be a finite group and let \(K\leq G\) be the subgroup generated by the reflections contained in \(G\).
  Then there is an isomorphism \[\Cl(V/G)\cong \Hom(G/K, \C^\times)\] of abelian groups.
\end{theorem}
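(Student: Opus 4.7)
The plan is to construct an explicit isomorphism $\Phi\colon \Cl(V/G) \to \Hom(G/K,\C^\times)$. Set $R = \C[V]$ and let $\pi\colon V\to V/G$ denote the quotient. Since $R$ is a unique factorization domain, every Weil divisor on $V$ is principal, so for any $\bar D\in \Div(V/G)$ the ramified pullback $\pi^\ast\bar D$ equals $\div_V(f)$ for some $f \in \C(V)$ unique up to $\C^\times$. The $G$\-/invariance of $\div_V(f)$ forces $g\cdot f = \chi(g) f$ for a character $\chi \in \Hom(G,\C^\times)$. Because $\pi^\ast \div_{V/G}(h) = \div_V(h)$ for any $h\in \C(V/G)^\times$, replacing $\bar D$ by $\bar D + \div_{V/G}(h)$ only multiplies $f$ by $h$ and leaves $\chi$ unchanged; thus $\Phi([\bar D]) := \chi$ is a well\-/defined homomorphism $\Cl(V/G)\to \Hom(G,\C^\times)$.

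The central step is to show that $\Phi$ takes values in $\Hom(G/K,\C^\times)$ via a local calculation at the mirrors. Fix a mirror $H$, let $W_H = \langle s\rangle$ denote its pointwise stabilizer (cyclic of order $e_H$), and pick a linear form $\ell_H\in V^\ast\subset R$ vanishing on $H$; then $s\cdot\ell_H = \zeta\,\ell_H$ for a primitive $e_H$\-/th root of unity $\zeta$. The pullback formula $\pi^\ast\bar H = e_H\cdot (\text{sum over the }G\text{-orbit of }H)$ gives $v_H(f) = e_H n$, where $n$ is the multiplicity of $\bar H$ in $\bar D$. Writing $f = \ell_H^{e_H n}\tilde g$ with $v_H(\tilde g) = 0$, the identity $s\cdot f = \chi(s) f$ reduces via $\zeta^{e_H n} = 1$ to $s\cdot\tilde g = \chi(s)\tilde g$; restricting to the generic point of $H$, where $s$ acts trivially on the residue field and $\tilde g$ is a unit, forces $\chi(s) = 1$. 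Letting $H$ vary over all mirrors shows $\chi$ vanishes on every reflection in $G$, hence on $K$.

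Injectivity of $\Phi$ is then quick: if $\chi = 1$, then $f\in\C(V)^G = \C(V/G)$, and the ramification formula gives $\pi^\ast\div_{V/G}(f) = \div_V(f) = \pi^\ast\bar D$; since $\pi^\ast$ is injective on $\Div(V/G)$, we obtain $\bar D = \div_{V/G}(f)$, which is principal. For surjectivity onto $\Hom(G/K,\C^\times)$, I would first note that $G/K$ acts faithfully on $V/K$ (a standard graph\-/irreducibility argument), so every one\-/dimensional character $\chi$ of $G/K$ appears as a weight in $R^K$ and in particular $R_\chi \neq 0$. Picking $0 \neq f\in R_\chi$, the same local eigenvalue computation read in reverse shows that the $K$\-/invariance of $f$ forces $e_H\mid v_H(f)$ at every mirror $H$, so $\div_V(f)$ is the pullback of an honest integer Weil divisor $\bar D$ on $V/G$, and $\Phi([\bar D]) = \chi$ by construction. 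The main technical hurdle is precisely this local eigenvalue\-/ramification analysis at the mirrors, which converts the algebraic condition $\chi(s) = 1$ into the divisibility $e_H \mid v_H(f)$; the rest reduces to bookkeeping with principal divisors in a UFD.
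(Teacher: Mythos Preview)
The paper does not supply its own proof of this statement; it is quoted as a known result with a citation to Benson \cite[Theorem~3.9.2]{Ben93}, so there is no in-paper argument to compare against.

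Your argument is correct and follows the classical route: pull divisors back to the UFD \(\C[V]\), read off the semi-invariance character of a defining function, kill reflections by a local computation at each mirror, and then check bijectivity. Two small comments. First, the phrase ``standard graph-irreducibility argument'' for the faithfulness of \(G/K\) on \(V/K\) is opaque; the cleanest justification is Galois theory for the extension \(\C(V)/\C(V)^G\), and the existence of a nonzero \(\chi\)-semi-invariant for every linear character \(\chi\) of \(G\) then follows e.g.\ from Molien's formula or from the fact that any faithful \(G\)-module over \(\C\) generates all irreducibles in its symmetric algebra (the paper itself invokes \cite[Lemma~2.1]{Nak82} for the analogous fact in Lemma~\ref{lem:clgrp3}). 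Second, in your mirror computation you implicitly use that \(s\) acts trivially on the residue field at the generic point of \(H\); this is exactly what turns \(s\cdot\tilde g = \chi(s)\tilde g\) into \(\chi(s)=1\), and it is worth making that step explicit since \(\tilde g\) is only a rational function, not a polynomial.
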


With \(K\) as in the theorem, let \(\Ab(G/K)\coloneqq (G/K)/[G/K, G/K]\) be the abelianization of \(G/K\) and write \(\Ab(G/K)^\vee\) for the group of irreducible (hence linear) characters of this group.
By elementary character theory \cite[Theorem~I.9.5]{BKZ18}, we have \[\Hom(G/K,\C^\times) = \Ab(G/K)^\vee\] and we hence often write \(\Ab(G/K)^\vee\) for the class group of \(V/G\).

\subsection{\texorpdfstring{\(\Q\)\=/factorial}{Q-factorial} terminalizations}
From now on, let \(G\leq \SL(V)\) be a finite subgroup.

\begin{definition}[\(\Q\)\=/factorial terminalization]
  Let \(Y\) be a normal \(\Q\)\=/factorial variety.
  A \emph{\(\Q\)\=/factorial terminalization} of \(Y\) is a projective birational morphism \(\phi: X\to Y\) such that \(X\) is a normal \(\Q\)\=/factorial variety with terminal singularities and \(\phi\) is crepant.
\end{definition}

In our context, a \(\Q\)\=/factorial terminalization \(X\to V/G\) is often referred to as \emph{minimal model}, see for example \cite{IR96}.
However, the usage of this terminology is not entirely uniform, which is why we decided to use the more unwieldy term `\(\Q\)\=/factorial terminalization'.

We have the following special case of the deep result achieved in \cite{BCHM10}.
\begin{theorem}[Birkar--Cascini--Hacon--McKernan]
  Let \(G\leq \SL(V)\) be a finite group.
  There exists a \(\Q\)\=/factorial terminalization of \(V/G\).
\end{theorem}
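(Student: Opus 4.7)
The plan is to deduce the statement from the main theorem of \cite{BCHM10} after verifying its hypotheses for $V/G$. First I would check that $V/G$ has Gorenstein canonical singularities: since $G \leq \SL(V)$ contains no reflections, the quotient map $V \to V/G$ is \'etale in codimension one and $K_V$ descends to a trivial Cartier divisor on $V/G$. The Reid--Tai age criterion then yields non-negative discrepancies for every exceptional divisor, because all ages of elements of $\SL(V)$ are non-negative integers.

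Next I would choose a projective log resolution $f : Y \to V/G$, available by Hironaka and projective because the base is affine. Writing $K_Y = f^\ast K_{V/G} + \sum a_i E_i$ with $a_i \in \Z_{\geq 0}$, I would introduce a boundary $\Delta = \sum \delta_i E_i$ with small rational $\delta_i > 0$ chosen so that every non-crepant $E_i$ (the ones with $a_i > 0$) receives strictly positive weight, making $(Y,\Delta)$ a klt pair projective over the affine base $V/G$. This is exactly the setting where the relative MMP of \cite{BCHM10} applies and terminates, producing a $\Q$-factorial variety $X$ together with a projective birational morphism $\phi : X \to V/G$ such that $K_X + \Delta_X$ is $\phi$-nef.

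Finally I would read off crepancy and terminality from the output. By the construction of $\Delta$ and the negativity lemma applied to $\phi$, precisely the non-crepant exceptional divisors are contracted by the MMP; what remains above $V/G$ are exceptional divisors with discrepancy zero, so $K_X = \phi^\ast K_{V/G}$ and $\phi$ is crepant. The strict positivity of the weights $\delta_i$ simultaneously forces every exceptional valuation over $X$ to have positive discrepancy with respect to $K_X$, giving terminal singularities on $X$ and completing the verification of the three defining conditions.

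The main obstacle is of course the termination of the MMP in the middle step, which is the deep content of \cite{BCHM10}; once it is invoked, the rest of the argument follows essentially formally from standard MMP theory together with the Reid--Tai criterion and the negativity lemma.
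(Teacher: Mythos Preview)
The paper's argument is much shorter than yours: it verifies that $V/G$ has canonical singularities via the Reid--Tai criterion (exactly as you do) and then invokes \cite[Corollary~1.4.3]{BCHM10} directly, which already packages the existence of a $\Q$-factorial terminalization for any variety with canonical singularities. What you sketch is essentially a proof of that corollary from the main termination theorem of \cite{BCHM10}, so the two approaches differ only in how much of \cite{BCHM10} is unpacked.

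That said, your terminality step has a genuine gap. The negativity lemma shows that every exceptional divisor \emph{surviving} on $X$ must be crepant, whence $K_X = \phi^\ast K_{V/G}$; it does not show that every crepant $E_i$ survives the MMP. If some crepant $E_i$ were contracted, it would give an exceptional valuation over $X$ with $a(E_i;X) = a(E_i;V/G) = 0$, and $X$ would be only canonical, not terminal. Your appeal to ``the strict positivity of the weights $\delta_i$'' cannot help here, because in your construction the crepant $E_i$ carry $\delta_i = 0$. The gap can be closed: if a crepant $E_i$ were contracted at some MMP step, the strict increase of discrepancies along the exceptional locus of that step would give $a(E_i;X,\Delta_X) > -\delta_i = 0$, while $\Delta_X = 0$ and crepancy of $\phi$ force $a(E_i;X) = 0$, a contradiction. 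Either this argument, or the extraction statement \cite[Corollary~1.4.3]{BCHM10} itself, should replace the justification you give.
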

\noindent This is \cite[Corollary~1.4.3]{BCHM10} together with the fact that \(V/G\) has canonical singularities by the Reid--Tai criterion \cite[Theorem~3.21]{Kol13}, see \cite[Theorem~2.1.15]{Sch23} for details.

\subsection{McKay correspondence}
Throughout let \(\phi: X\to V/G\) be a \(\Q\)\=/factorial terminalization.
A deep connection between \(X\to V/G\) and the group \(G\) itself is given by a version of the McKay correspondence due to Ito and Reid \cite{IR96}.
We recall some notation from \cite{IR96}.

For the following definition, let \(g\in \GL(V)\) be of finite order \(r\) and fix a primitive \(r\)\=/th root of unity \(\zeta_r\).
Then there are integers \(0\leq a_i < r\), such that the eigenvalues of \(g\) are given by the powers \(\zeta_r^{a_1},\dots, \zeta_r^{a_n}\), where \(\dim V = n\).
\begin{definition}[Age and junior elements]
  We call the number \[\age(g)\coloneqq \frac{1}{r}\sum_{i = 1}^na_i\] the \emph{age} of \(g\).
  Elements of age 1 are called \emph{junior}.
\end{definition}

By construction, the number \(\age(g)\) is an integer if \(g\in \SL(V)\) and the junior elements in \(\SL(V)\) are hence the non\-/trivial elements of minimal age 1.
The age is by definition invariant under conjugacy and we refer to the conjugacy classes of a group \(G\leq \SL(V)\) consisting (only) of junior elements as \emph{junior conjugacy classes}.

\begin{remark}
  \label{rem:agewelldef}
  The age as defined above depends on the choice of the root of unity \(\zeta_r\), although this is hidden in the notation.
  See \cite[p.\ 224, Remark~3]{IR96} for an example demonstrating this.
  However, the subgroup generated by the junior elements, which is relevant for Theorem~\ref{thm:intro}, is independent of any choices.
  We give a short argument for this in Appendix~\ref{sec:app}, see Lemma~\ref{lem:app2}.
\end{remark}

\begin{definition}[Monomial valuation]
  For non\-/negative integers \(a_1,\dots,a_n\in \Z_{\geq 0}\) with \(\gcd(a_1,\dots, a_n) = 1\), we construct a discrete valuation \(v : \C(x_1,\dots, x_n) \to \Z\) defined on \(\C[x_1,\dots, x_n]\) via \[\sum_{\alpha\in\Z_{\geq 0}^n}\lambda_\alpha x_1^{\alpha_1}\cdots x_n^{\alpha_n} \mapsto \min_{\substack{\alpha\in \Z_{\geq 0}^n\\ \lambda_\alpha\neq 0}}\sum_{i = 1}^n\alpha_ia_i\;.\]
  We call \(v\) a \emph{monomial valuation.}
\end{definition}
\noindent This construction indeed gives a well\-/defined discrete valuation, see \cite[Definition~2.1]{Kal02}.

\begin{notation}
  Let \(g\in\SL(V)\) of finite order \(r\) be a junior element with respect to the primitive \(r\)\=/th root of unity \(\zeta_r\).
  That is, the eigenvalues of \(g\) are \(\zeta_r^{a_1},\dots, \zeta_r^{a_n}\) with \(\frac{1}{r}\sum_{i = 1}^na_i = 1\).
  For \(d \coloneqq \gcd(a_1,\dots,a_n)\), we obtain \(\frac{1}{d}\) as the age of \(g\) with respect to the root of unity \(\zeta_r^d\).
  As \(\age(g)\) is an integer, we conclude \(d = 1\) and we can therefore define a monomial valuation \[v_g:\C(x_1,\dots, x_n)\to \Z\] for \(g\) via \(a_1,\dots,a_n\).
\end{notation}

The construction of \(v_g\) again depends on the choice of a root of unity, see Remark~\ref{rem:agewelldef} above.
The valuation \(v_g\) is stable under conjugacy of \(g\) and we can hence associate valuations to conjugacy classes in \(G\) without needing to specify a particular representative.

\begin{theorem}[Ito--Reid, `McKay correspondence']
  \label{thm:mckay}
  Let \(G\leq\SL(V)\) be a finite group and let \(X\to V/G\) be a \(\Q\)\=/factorial terminalization.
  Then there is a one\-/to\-/one correspondence between the junior conjugacy classes of \(G\) and the irreducible exceptional divisors on \(X\).

  More precisely, if \(E\) is a divisor corresponding to a conjugacy class of a junior element \(g\in G\) of order \(r\) in this way, then \(v_E = \frac{1}{r}v_g\), where \(v_E\) is the valuation of \(E\) and we identify \(\C(X) = \C(V)^G\) via the birational morphism \(X\to V/G\).
\end{theorem}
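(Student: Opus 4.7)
The plan is to identify exceptional divisors on $X$ with divisorial valuations on $\C(V)^G$ centred at the singular locus, and then to classify the crepant such valuations via the age. First, any irreducible exceptional divisor $E\subset X$ determines a divisorial valuation $v_E$ on $\C(X)=\C(V)^G$. Since $\phi$ is crepant, each such $v_E$ has discrepancy zero over $V/G$. Conversely, terminality of $X$ forces every divisorial valuation over $X$ to have strictly positive discrepancy, so every divisorial valuation on $\C(V)^G$ with discrepancy zero over $V/G$ must arise as $v_E$ for some exceptional $E$ of $\phi$. The problem therefore reduces to classifying crepant divisorial valuations on $\C(V)^G$.

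Next I would pass to the Galois extension $\C(V)^G\hookrightarrow \C(V)$. A divisorial valuation $v$ on $\C(V)^G$ extends to $\C(V)$, the extensions form a single $G$\=/orbit, and for any extension $\tilde v$ with decomposition group $\langle g\rangle$ of order $r$, the ramification index is $r$, so $v=\tfrac{1}{r}\tilde v|_{\C(V)^G}$. After completing at the centre, the decomposition group acts via a diagonalizable linear element $g\in G$, and a standard toroidal argument shows that, up to replacing $\tilde v$ by a conjugate under $G$, one can assume $\tilde v$ is the monomial valuation on $\C(V)=\C(x_1,\dots,x_n)$ whose weights $a_1,\dots,a_n$ are precisely the exponents appearing in the eigenvalues $\zeta_r^{a_1},\dots,\zeta_r^{a_n}$ of $g$. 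This already produces the desired formula $v_E=\tfrac{1}{r}v_g$ and shows that conjugate elements of $G$ give the same valuation on $\C(V)^G$.

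The core computation is then the age--discrepancy identity: the divisor on $V/G$ with valuation $v=\tfrac{1}{r}v_g$ has discrepancy $\age(g)-1$. I would derive this from the weighted blow\-/up of $V$ with weights $(a_1,\dots,a_n)$, which extracts a divisor of discrepancy $\sum a_i - 1$ on $V$; the ramification factor $\tfrac{1}{r}$ along $\C(V)/\C(V)^G$ then yields discrepancy $\tfrac{1}{r}\sum a_i -1=\age(g)-1$ on $V/G$. Consequently $v$ is crepant if and only if $\age(g)=1$, i.e.\ $g$ is junior. Combined with the previous paragraph, crepant divisorial valuations over $V/G$ are in bijection with junior conjugacy classes of $G$, and by the first paragraph these are precisely the exceptional divisors of $\phi$ (terminality rules out any unrealized junior class, since such a class would give a crepant, hence non\-/terminal, divisor over $X$).

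The main obstacle is the local analysis in the second paragraph: showing that \emph{every} divisorial valuation on $\C(V)^G$ with small discrepancy extends, up to conjugation, to a diagonal monomial valuation in suitable coordinates. This is essentially the content of the Reid--Tai criterion applied in the relative setting, and it is the step that genuinely uses the finiteness and linearity of the $G$\=/action; once it is in hand, the age--discrepancy computation and the packaging into the bijective correspondence are formal.
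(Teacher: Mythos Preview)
The paper does not actually prove this theorem: immediately after the statement it writes ``See \cite[Section~2.8]{IR96} for a proof'' and moves on. There is therefore no in-paper argument to compare your proposal against; the theorem is quoted from Ito--Reid as background.

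That said, your sketch is broadly the Ito--Reid strategy: identify exceptional divisors of a \(\Q\)\=/factorial terminalization with crepant divisorial valuations on \(\C(V)^G\), lift such a valuation through the Galois cover \(\C(V)^G\hookrightarrow\C(V)\), analyse the inertia (which in characteristic~0 is cyclic, generated by some \(g\)), and compute the discrepancy as \(\age(g)-1\) via a weighted blow-up. One point to tighten: in your second paragraph you assert that the lifted valuation \(\tilde v\) is, after conjugation, \emph{equal} to the monomial valuation \(v_g\). In general a divisorial valuation on \(\C(V)\) with cyclic inertia \(\langle g\rangle\) need not be monomial; what Ito--Reid actually show is the inequality \(\text{discrepancy}\geq \age(g)-1\), with equality precisely when \(\tilde v=v_g\). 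Crepancy then forces both \(\age(g)=1\) and \(\tilde v=v_g\) simultaneously, which is how the formula \(v_E=\tfrac{1}{r}v_g\) falls out. You flag this step as the ``main obstacle'', which is accurate, but the way you phrase it suggests one first reduces to the monomial case and then computes the discrepancy, whereas the logic runs the other way.
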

\noindent See \cite[Section~2.8]{IR96} for a proof.

Let \(m\in\Z_{\geq 0}\) be the number of junior conjugacy classes in \(G\).
Using \cite[Proposition~II.6.5~(c)]{Har77}, we have an exact sequence of abelian groups
\begin{align}
  \begin{tikzcd}[ampersand replacement=\&] \bigoplus_{i = 1}^m \Z E_i \arrow{r} \& \Cl(X) \arrow["\phi_\ast"]{r} \& \Cl(V/G) \arrow{r} \& 0\;,\end{tikzcd}\label{eq:ses1}
\end{align} where \(E_i\) are the irreducible exceptional divisors on \(X\).
As \(\Cl(V/G)\) is finitely generated, this implies that \(\Cl(X)\) is finitely generated as well.

\begin{notation}
  We write \(\Cl(X)^\tors\leq \Cl(X)\) for the torsion subgroup of \(\Cl(X)\) and \(\Cl(X)^\free\) for the corresponding factor group, that is, \(\Cl(X)^\free = \Cl(X)/\Cl(X)^\tors\).
\end{notation}

The sequence in \eqref{eq:ses1} can be extended to a short exact sequence by Grab \cite{Gra19}.
\begin{proposition}[Grab]
  \label{prop:ses}
  There is a short exact sequence of abelian groups \[\begin{tikzcd}0\arrow{r} & \bigoplus_{i = 1}^m \Z E_i \arrow{r} & \Cl(X)\arrow["\phi_\ast"]{r} & \Cl(V/G) \arrow{r} & 0\;,\end{tikzcd}\] where \(E_i\in\Div(X)\) are the irreducible components of the exceptional divisor of \(\phi\) and \(\phi_\ast : \Cl(X)\to \Cl(V/G)\) is the induced push\-/forward map.
\end{proposition}
\noindent See \cite[Proposition~4.1.3]{Gra19} for a proof.

\begin{remark}
  \label{rem:free}
  As \(\Cl(V/G)\) is a torsion group, we can deduce with Theorem~\ref{thm:mckay} and Proposition~\ref{prop:ses} that \(\Cl(X)^\free \cong \Z^m\) for the free part of \(\Cl(X)\), where \(m\in\Z_{\geq 0}\) is the number of junior conjugacy classes in \(G\).
\end{remark}

\subsection{Cox rings}
To understand the torsion part \(\Cl(X)^\tors\) of \(\Cl(X)\) we use the \emph{Cox ring} \(\R(V/G)\) of \(V/G\).
The precise definition of this ring for a normal variety \(Y\) is a bit involved and here we just remark that if the class group \(\Cl(Y)\) is a free group, we have \[\R(Y) = \bigoplus_{[D]\in \Cl(Y)}\Gamma(Y, \mathcal O_Y(D))\;,\] see \cite[Section~1.4]{ADHL15} for the general case.
The Cox ring is well\-/defined for normal varieties \(Y\) with finitely generated class group \(\Cl(Y)\) under the additional assumption \(\Gamma(Y, \mathcal O_Y^\times) = \C^\times\).
These properties are in particular fulfilled for linear quotients \(V/G\) and we may hence speak about the Cox ring \(\R(V/G)\).

We summarize a result on the Cox ring of \(\R(V/G)\) by Arzhantsev and Ga\v{\i}fullin \cite{AG10}.
Recall that \(\Cl(V/G) \cong \Ab(G)^\vee\) as \(G\leq \SL(V)\) cannot contain any reflections.
There is an action of \(\Ab(G)\) on the ring \(\C[V]^{[G,G]}\) induced by the action of \(G\).
This action induces a grading by \(\Ab(G)^\vee\) by setting the graded component of a character \(\chi\in\Ab(G)^\vee\) to be \[\C[V]^{[G,G]}_\chi \coloneqq \{f\in\C[V]^{[G,G]}\mid \gamma.f = \chi(\gamma) f\text{ for all }\gamma\in\Ab(G)\}\;,\] where we write \(\gamma.f\) for the action of \(\gamma\in\Ab(G)\) on \(f\in \C[V]^{[G,G]}\).
\begin{theorem}[Arzhantsev--Ga\v{\i}fullin]
  \label{thm:coxVG}
  Let \(G\leq \SL(V)\) be a finite group.
  Then there is an \(\Ab(G)^\vee\)\=/graded isomorphism \(\R(V/G) \cong \C[V]^{[G,G]}\).
\end{theorem}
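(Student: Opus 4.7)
The plan is to realize $\C[V]^{[G,G]}$ as the coordinate ring of the characteristic space (universal torsor) of $V/G$. By Benson's theorem (Theorem~\ref{thm:clgrpVG}) applied to $G \leq \SL(V)$, we have $\Cl(V/G) \cong \Hom(G, \C^\times) = \Ab(G)^\vee$, so the characteristic quasi-torus of $V/G$ is the finite abelian group $\Ab(G) = G/[G,G]$. The quotient map $V \to V/G$ factors through $V/[G,G]$, and the second map $V/[G,G] \to V/G$ is the geometric quotient by the residual $\Ab(G)$-action. I would expect this intermediate variety to be precisely the characteristic space of $V/G$, whose coordinate ring is then $\R(V/G)$ by general theory.

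The grading half of the statement is the easy part. The $\Ab(G)$-action on $V/[G,G]$ induces the decomposition
\[\C[V]^{[G,G]} = \bigoplus_{\chi \in \Ab(G)^\vee} \C[V]^{[G,G]}_\chi\]
stated in the theorem, and this decomposition is multiplicative with degree-zero piece $\C[V]^G = \Gamma(V/G, \mathcal{O}_{V/G})$. To match the remaining graded pieces with those of $\R(V/G)$, I would identify, for each character $\chi$, the space $\C[V]^{[G,G]}_\chi$ of semi-invariants of weight $\chi$ with $\Gamma(V/G, \mathcal{O}_{V/G}(D_\chi))$ for a Benson divisor $D_\chi$ representing the class of $\chi$. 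Concretely, for $f \in \C[V]^{[G,G]}_\chi$, the divisor $\div(f)$ on $V$ is $G$-invariant and descends to a divisor on $V/G$ linearly equivalent to $D_\chi$; this is essentially the content of Benson's construction.

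The hard part will be that whenever $\Cl(V/G)$ has torsion -- which is the generic situation here -- the Cox ring $\R(V/G)$ is defined via a sheaf of divisorial algebras (see \cite[Section~1.4]{ADHL15}) rather than as a naive direct sum over divisor classes; its multiplication depends on a shifting system and cannot be read off from the piecewise identifications above. To circumvent this, I would invoke the characterization of Cox rings via characteristic spaces (\cite[Construction~1.6.3.1]{ADHL15}) and verify instead that $V/[G,G] \to V/G$ satisfies the axioms of such a space. The crucial geometric input is that this cover is étale in codimension one, which follows from the fact that both $V \to V/[G,G]$ and $V \to V/G$ are étale in codimension one (as $[G,G] \leq G \leq \SL(V)$ contain no reflections); purity then propagates this property to the induced finite cover. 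Once this is in place, the general theory of Cox rings directly yields the desired $\Ab(G)^\vee$-graded isomorphism $\R(V/G) \cong \C[V]^{[G,G]}$.
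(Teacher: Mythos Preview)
The paper does not give its own proof of this statement; it simply records the result and cites \cite[Theorem~3.1]{AG10}. Your sketch is essentially the argument of that reference: exhibit $V/[G,G]\to V/G$ as the characteristic space of $V/G$ by checking that the residual $\Ab(G)$-action is free in codimension one (which holds because $G\leq\SL(V)$, and hence also $[G,G]$, contains no reflections), and then read off the Cox ring together with its $\Ab(G)^\vee$-grading from the quotient presentation. One detail you leave implicit is the remaining hypothesis of the characteristic-space characterization in \cite{ADHL15}: beyond strong stability one also needs $\Gamma(V/[G,G],\mathcal O^\times)=\C^\times$ and that $V/[G,G]$ is $\Ab(G)$-factorial (plain factoriality can fail, e.g.\ when $\Ab([G,G])\neq 1$). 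The first is clear, and the second holds because any $G$-invariant divisor on $V$ is cut out by a $G$-semi-invariant $f\in\C[V]$, and such an $f$ is automatically $[G,G]$-invariant.
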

\noindent See \cite[Theorem~3.1]{AG10} for a proof.

\section{Correspondence of effective divisors and homogeneous elements}
\label{sec:corr}

To be able to deduce information about \(\Cl(X)\), we use the connection between effective divisors and \emph{canonical sections} in the Cox ring \(\R(V/G)\) of \(V/G\).
We recall this correspondence and adapt it to our setting.

\begin{notation}
  For a divisor \(D\in \Div(V/G)\), we write \(\chi_{[D]}\in\Ab(G)^\vee\) for the character corresponding to the class \([D]\in\Cl(V/G)\) under the isomorphism in Theorem~\ref{thm:clgrpVG}.
\end{notation}

\begin{remark}
  \label{rem:isos}
  Working with the ring \(\R(V/G)\) brings two subtle problems.
  First of all, homogeneous elements \(f\in\R(V/G)\) are only residue classes of elements of the function field \(\C(V)^G\) as \(\Cl(V/G)\) is a torsion group.
  We hence cannot immediately identify such elements \(f\) with a function in \(\C(V)^G\).
  However, for a divisor \(D\in\Div(V/G)\) we have an isomorphism \[\psi_D : \Gamma(V/G, \mathcal O_{V/G}(D)) \to \R(V/G)_{[D]}\;\] by \cite[Lemma~1.4.3.4]{ADHL15}.
  That means, once we fixed a representative of the degree of a homogeneous element \(f\in \R(V/G)\) we can uniquely lift \(f\) to an element of \(\C(V)^G\).

  The second problem comes from the fact that we make heavy use of the graded isomorphism \(\Psi : \R(V/G) \to \C[V]^{[G,G]}\) as in Theorem~\ref{thm:coxVG} to the extent that one might forget that the isomorphism is not an identity.
  This is in particular important when we work with a valuation \(v:\C(V)\to\Z\).
  We can only use \(v\) on elements of \(\C[V]^{[G,G]}\) and cannot apply \(v\) to elements of \(\R(V/G)\) in a well\-/defined way without choosing a system of representatives for the class group.
  For \(D\in \Div(V/G)\), we have an isomorphism of vector spaces \[\tilde\psi_D : \Gamma(V/G,\mathcal O_{V/G}(D)) \to \C[V]^{[G,G]}_{\chi_{[D]}}\] by setting \(\tilde \psi_D \coloneqq\Psi\circ\psi_D\).
  Notice that for the trivial divisor, this gives an identity as we have \[\Gamma(V/G, \mathcal O_{V/G}(0)) = \C[V]^G = \C[V]^{[G,G]}_1\;,\] where 1 denotes the trivial character.
\end{remark}

\begin{notation}
  Let \(\chi\in\Ab(G)^\vee\) and let \(D\in\Div(V/G)\) with \(\chi = \chi_{[D]}\).
  For a homogeneous element \(0\neq f\in \C[V]^{[G,G]}_\chi\), let \(\tilde f\in \C(V)^G\) be the rational function mapping to \(f\) via the isomorphism determined by \(D\) as in Remark \ref{rem:isos}.
  We associate to \(f\) an effective divisor \[\div_{[D]}(f)\coloneqq \div(\tilde f) + D\in\Div(V/G)\;,\] the \textit{\([D]\)\=/divisor} of \(f\).
  This construction is well\-/defined, see \cite[Proposition~1.5.2.2]{ADHL15}.
  In particular, the \([D]\)\=/divisor is independent of the choice of the representative \(D\).
  We have \([{\div_{[D]}(f)}] = [D]\) by definition.
\end{notation}

The construction of a \([D]\)\=/divisor is not limited to our setting; see \cite[Construction~1.5.2.1]{ADHL15} for more details and the general case.
We point out that \(f\in\C[V]^{[G,G]}\) is in general not an element of \(\C(V)^G\), that is, there is no meaning in writing \(\div(f)\).

The \([D]\)\=/divisor behaves well with respect to the multiplication of elements.
\begin{lemma}
  \label{lem:Ddivadd}
  For non\-/zero homogeneous elements \(f\in \C[V]^{[G,G]}_{\chi_{[D_1]}}\) and \(g\in \C[V]^{[G,G]}_{\chi_{[D_2]}}\), we have \[\div_{[D_1] + [D_2]}(fg) = \div_{[D_1]}(f) + \div_{[D_2]}(g)\;.\]
\end{lemma}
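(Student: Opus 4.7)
The plan is to reduce the claim to the multiplicativity of the canonical lifting $f \mapsto \tilde f$, which in turn follows because the isomorphism $\psi_D$ from global sections to the graded piece of the Cox ring is compatible with multiplication. Concretely, if I can show that the lift $\widetilde{fg}$ of $fg$ determined by the representative $D_1 + D_2$ equals $\tilde f \cdot \tilde g$, where $\tilde f$ and $\tilde g$ are the lifts determined by $D_1$ and $D_2$ respectively, then the lemma drops out of the additivity of $\div(\cdot)$ on $\C(V/G)^\times$.

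First I would unpack the definition: $\tilde f \in \Gamma(V/G, \mathcal O_{V/G}(D_1))$ is the unique section with $\tilde\psi_{D_1}(\tilde f) = f$, and analogously for $\tilde g$. Using that the Cox ring's multiplication on homogeneous pieces corresponds under $\psi$ to the multiplication of sections (see \cite[Construction~1.5.1.1]{ADHL15}), one has
\[\psi_{D_1 + D_2}(\tilde f \cdot \tilde g) = \psi_{D_1}(\tilde f) \cdot \psi_{D_2}(\tilde g)\]
inside $\R(V/G)$. Applying the ring isomorphism $\Psi : \R(V/G) \to \C[V]^{[G,G]}$ of Theorem~\ref{thm:coxVG}, which is multiplicative, yields
\[\tilde\psi_{D_1 + D_2}(\tilde f \cdot \tilde g) = \tilde\psi_{D_1}(\tilde f) \cdot \tilde\psi_{D_2}(\tilde g) = f\cdot g.\]
Hence $\tilde f \cdot \tilde g$ is the canonical lift of $fg$ associated to the representative $D_1 + D_2$ of the class $[D_1]+[D_2]$.

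With this in place, compute directly
\[\div_{[D_1]+[D_2]}(fg) = \div(\tilde f \cdot \tilde g) + (D_1 + D_2) = \bigl(\div(\tilde f) + D_1\bigr) + \bigl(\div(\tilde g) + D_2\bigr) = \div_{[D_1]}(f) + \div_{[D_2]}(g),\]
using additivity of the principal divisor map. Finally, I would invoke the fact recalled just before the lemma that $\div_{[D]}(f)$ is independent of the choice of representative $D$ of its class, so the left-hand side is unambiguously defined.

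The only genuine subtlety, and so the main thing I would take care over, is the bookkeeping of the two isomorphisms $\psi_D$ and $\Psi$ emphasised in Remark~\ref{rem:isos}: one must make sure that the representative $D_1 + D_2$ is chosen on the left-hand side so that the lifts composed multiplicatively from those on the right-hand side do indeed agree. Once the multiplicativity of $\psi$ (and of $\Psi$) is in hand, the rest is a short formal manipulation rather than a substantive geometric argument.
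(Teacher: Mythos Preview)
Your argument is correct. The paper itself does not give a proof of this lemma but simply refers to \cite[Proposition~1.5.2.2~(iii)]{ADHL15}; what you have written is essentially the content of that reference specialised to the present setting, namely that the maps \(\psi_D\) assemble into a graded ring structure (so lifts multiply) and then additivity of principal divisors finishes the job.
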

\noindent See \cite[Proposition~1.5.2.2~(iii)]{ADHL15} for a proof.

We have a converse to the construction of the \([D]\)\=/divisor.
\begin{proposition}
  \label{prop:Ddivcorr}
  Let \(E\in\Div(V/G)\) be an effective divisor.
  There exist a class \([D]\in\Cl(V/G)\) and an element \(f\in \C[V]^{[G,G]}_{\chi_{[D]}}\) with \(E = \div_{[D]}(f)\).
  The element \(f\) is unique up to constants; it is called a \emph{canonical section} of \(E\).
\end{proposition}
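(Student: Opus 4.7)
My plan is to invert the $[D]$\=/divisor construction. Since $[\div_{[D]}(f)] = [D]$ holds by definition, the class $[D] \in \Cl(V/G)$ is forced to equal $[E]$, so the only real content is producing a canonical section $f$ and establishing its uniqueness up to a scalar.

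For existence, I would fix any representative $D \in \Div(V/G)$ of the class $[E]$. Linear equivalence gives $E - D = \div(\tilde f)$ for some $\tilde f \in \C(V)^G$, and the effectivity of $E$ rewrites this as $\div(\tilde f) + D = E \geq 0$, so $\tilde f$ lies in $\Gamma(V/G, \mathcal{O}_{V/G}(D))$. Setting $f \coloneqq \tilde\psi_D(\tilde f) \in \C[V]^{[G,G]}_{\chi_{[D]}}$ via the isomorphism from Remark~\ref{rem:isos}, the very definition of the $[D]$\=/divisor then yields $\div_{[D]}(f) = \div(\tilde f) + D = E$.

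For uniqueness, given two canonical sections $f, f' \in \C[V]^{[G,G]}_{\chi_{[D]}}$ of $E$ with rational preimages $\tilde f, \tilde f'$ under $\tilde\psi_D$, the equality $\div(\tilde f) + D = \div(\tilde f') + D$ forces $\tilde f / \tilde f'$ to be a unit in $\C[V]^G$. Since $\Gamma(V/G, \mathcal{O}_{V/G}^\times) = \C^\times$ — a property already invoked in the paper's discussion of Cox rings — this unit must be a non\-/zero scalar $c$, and $\C$\=/linearity of $\tilde\psi_D$ then gives $f = c f'$.

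I do not foresee a genuine obstacle here: the whole argument amounts to bookkeeping within the formalism set up in Remark~\ref{rem:isos}. The one point that has to be handled with a little care is that $f$ lives in $\C[V]^{[G,G]}$ whereas $\div$ is only meaningful on $\C(V)^G$, so every manipulation of divisors must be performed on the preimages under $\tilde\psi_D$ rather than directly on elements of the Cox ring; independence of the chosen representative $D$ of $[E]$ is also automatic, since changing $D$ to $D + \div(h)$ simultaneously rescales $\tilde f$ to $\tilde f / h$ and leaves the resulting $[D]$\=/divisor untouched.
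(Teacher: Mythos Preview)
Your argument is correct and is precisely the standard unwinding of the definitions. The paper itself does not give a proof but defers to \cite[Proposition~1.5.2.2~(i)]{ADHL15} and \cite[Proposition~1.5.3.5~(ii)]{ADHL15}; what you have written is essentially the content of those references specialized to the present setting, so there is no meaningful difference in approach.
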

\noindent See \cite[Proposition~1.5.2.2~(i)]{ADHL15} and \cite[Proposition~1.5.3.5~(ii)]{ADHL15} for a proof.

Using the correspondence between effective divisors and homogeneous elements one can derive a precise description of the image of the strict transform of an effective divisor \(D\in\Div(V/G)\) in the free group \(\Cl(X)^\mathrm{free}\).
The general idea of this argument appeared to our knowledge first in \cite[Lemma~3.22]{DW17}.
We require a bit of notation.

Recall that by Theorem~\ref{thm:mckay} we have a one\-/to\-/one correspondence between the junior conjugacy classes of \(G\) and the irreducible components of the exceptional divisor of \(\phi : X \to V/G\).
Let \(\{g_1,\dots, g_m\}\in G\) be a minimal set of representatives of the junior conjugacy classes corresponding to exceptional prime divisors \(E_1,\dots, E_m\in\Div(X)\).
For each \(i\in\{1,\dots, m\}\), write \(v_i\) for the monomial valuation on \(\C(V)\) defined by \(g_i\) and recall from Theorem~\ref{thm:mckay} that we have \(v_{E_i} = \frac{1}{r_i}v_i\), where \(v_{E_i}\) is the divisorial valuation of \(E_i\) and \(r_i\) the order of \(g_i\).

The following also appears in \cite[Proposition~4.1.9]{Gra19}.
We present the argument from \cite[Lemma~4.3]{Yam18} for completeness.
Denote the canonical projection by \(\rho : \Cl(X)\to \Cl(X)^\mathrm{free}\).
\begin{proposition}
  \label{prop:divfree}
  Let \(D\geq 0\) be an effective divisor on \(V/G\) and let \(f\in \C[V]^{[G,G]}_{\chi_{[D]}}\) be a canonical section.
  Write \(\overline D\coloneqq \phi_\ast^{-1}(D)\) for the strict transform of \(D\) via \(\phi\).
  Then we have the equality \[\rho([\overline D]) = -\sum_{i = 1}^m \frac{1}{r_i}v_i(f) \rho([E_i])\] in \(\Cl(X)^\mathrm{free}\).
\end{proposition}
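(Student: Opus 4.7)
The plan is to reduce the statement to a principal divisor identity on $X$ by passing to a $G$-invariant power of $f$. Let $n$ be the order of the character $\chi_{[D]} \in \Ab(G)^\vee$, so that $f^n \in \C[V]^{[G,G]}_{1} = \C[V]^G$. Under the identification of the trivial-character degree in Remark~\ref{rem:isos}, $f^n$ is a regular function on $V/G$, hence a rational function on $X$ via the birational identification $\C(X) = \C(V)^G$. Iterated application of Lemma~\ref{lem:Ddivadd} yields $\div_{n[D]}(f^n) = nD$, and since $n[D] = 0$ this is the ordinary principal divisor identity $\div_{V/G}(f^n) = nD$ on $V/G$.

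Next I would analyse $\div_X(f^n)$. Its non-exceptional part is the strict transform of $nD$, namely $n\overline D$, while the remaining components are supported on the exceptional prime divisors $E_i$. Hence
$$\div_X(f^n) = n\overline D + \sum_{i = 1}^m v_{E_i}(f^n)\, E_i\;.$$
Applying Theorem~\ref{thm:mckay} to $f^n \in \C(V)^G$ gives $v_{E_i}(f^n) = \tfrac{1}{r_i}v_i(f^n) = \tfrac{n}{r_i}v_i(f)$. Since $\div_X(f^n)$ is principal, taking classes and projecting via $\rho$ produces
$$n\,\rho([\overline D]) + \sum_{i = 1}^m \tfrac{n}{r_i}\, v_i(f)\,\rho([E_i]) = 0$$
in $\Cl(X)^\free$. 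As $\Cl(X)^\free$ is torsion-free and embeds in $\Cl(X)^\free \otimes \Q$, I can cancel $n$ there to obtain the claimed equality; the right-hand side $-\sum \tfrac{1}{r_i}v_i(f)\,\rho([E_i])$, which a priori has rational coefficients, in fact lies in $\Cl(X)^\free$ because it equals $\rho([\overline D])$.

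I expect the main obstacle to be bookkeeping rather than ideas: one must verify that the identifications in Remark~\ref{rem:isos} are compatible with taking the $n$-th power of $f$, and with passing between the function fields of $V/G$ and $X$. Once the algebraic and geometric sides are matched, Theorem~\ref{thm:mckay} supplies precisely the numerical input needed to convert the polynomial-level monomial valuation $v_i$ into the divisorial valuation $v_{E_i}$, producing the crucial factor $\tfrac{1}{r_i}$.
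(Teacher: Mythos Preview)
Your proposal is correct and follows essentially the same route as the paper: pass to a power \(f^n\) lying in \(\C[V]^G\), identify \(\div_{V/G}(f^n)=nD\) via Lemma~\ref{lem:Ddivadd} and Remark~\ref{rem:isos}, decompose \(\div_X(f^n)\) into \(n\overline D\) plus exceptional contributions, convert \(v_{E_i}\) to \(\tfrac{1}{r_i}v_i\) using Theorem~\ref{thm:mckay}, and cancel \(n\) in the torsion\-/free quotient. Your explicit remark that the rational coefficients \(\tfrac{1}{r_i}v_i(f)\) yield an element of \(\Cl(X)^\free\) because the sum equals \(\rho([\overline D])\) is a point the paper leaves implicit.
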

\begin{proof}
  As \(f\) is homogeneous with respect to the action of \(\Ab(G)\), there is \(r \in\Z_{> 0}\) such that \(f^r\in \C[V]^{[G,G]}_1 = \C[V]^G\subseteq \C(V)^G\) and \(rD\) is principal.
  In particular, we have \[rD = \div_{[rD]}(f^r) = \div_{[0]}(f^r) = \div(f^r)\;,\] where the first equality is by Lemma~\ref{lem:Ddivadd}, the second by the independence of choice of representative and the third is by the fact that \(f^r\in\C[V]^G\), see Remark~\ref{rem:isos}.
  Then we have \[\div(\phi^\ast(f^r)) = r\overline D + \sum_{i = 1}^mv_{E_i}(\phi^\ast(f^r)) E_i\;.\]
  Hence, we have the equality of classes \[[r\overline D] = -\sum_{i = 1}^m v_{E_i}(\phi^\ast(f^r)) [E_i]\] in \(\Cl(X)\).
  Now \(v_{E_i}(\phi^\ast(f^r)) = \frac{1}{r_i}v_i(f^r)\) by Theorem~\ref{thm:mckay}.
  Noting that \(v_i\) is a valuation on \(\C(V)\) (and not just \(\C(V)^G\)) this yields \[[r\overline D] = -\sum_{i = 1}^m \frac{r}{r_i}v_i(f) [E_i]\;.\]
  We may finally cancel \(r\) in the free group \(\Cl(X)^\mathrm{free}\) giving \[\rho([\overline D]) = - \sum_{i = 1}^m \frac{1}{r_i}v_i(f)\rho([E_i])\;.\]
  \vskip-\baselineskip\qedhere
\end{proof}

\begin{remark}
  Notice that the proof of Proposition~\ref{prop:divfree} in fact computes the degree of a preimage of the canonical section \(f\) under a graded surjective morphism \(\R(X) \to \R(V/G)\) induced by \(\phi\).
  See \cite[Proposition~4.1.3.1]{ADHL15} for the construction of this morphism between the Cox rings.
\end{remark}

\section{A digression on gradings}
\label{sec:grad}

To understand the group \(\Cl(X)\), we first have to get a better understanding of the grading of \(\C[V]^{[G,G]}\) by \(\Ab(G)^\vee\).
Unfortunately, there are a few subtle details involved, turning this into quite a technical discussion.

Again, let \(g_1,\dots, g_m\in G\) be representatives of the junior conjugacy classes corresponding to the exceptional divisors \(E_1,\dots, E_m\in \Div(X)\) of \(\phi\) and write \(v_1,\dots,v_m\) for the monomial valuations corresponding to the \(g_i\).

At first, fix \(i\in \{1,\dots, m\}\).
Let the eigenvalues of \(g_i\) be given by \(\zeta_{r_i}^{a_{i, 1}}, \dots, \zeta_{r_i}^{a_{i, n}}\) with a primitive \(r_i\)\=/th root of unity \(\zeta_{r_i}\) and integers \(0\leq a_{i, j} < r_i\), where \(r_i\) is the order of \(g_i\) in \(G\) and \(n = \dim V\).
This induces a \(\Z\)\=/grading \(\deg_i\) on \(\C[x_1,\dots, x_n]\) by putting \(\deg_i(x_j) \coloneqq a_{i, j}\).
For a polynomial \(f\in \C[x_1,\dots, x_n]\), the valuation \(v_i(f)\) is then the degree of the homogeneous component of \(f\) of minimal degree with respect to \(\deg_i\).
Notice that in this construction, we consider \(V\) in an eigenbasis of \(g_i\) giving rise to the isomorphism \(\C[V] \cong \C[x_1,\dots, x_n]\).
However, the grading \(\deg_i\) is well\-/defined on \(\C[V]\) for any basis of \(V\), although the variables of the polynomial ring are in general not homogeneous.
As we endow the same ring with gradings by different groups, we use the non\-/standard notation \((\C[V],\Z,\deg_i)\) for the ring \(\C[V]\) graded by \(\Z\) via \(\deg_i\).

The group \(\langle g_i\rangle\) acts on \(\C[V]\) and hence induces a grading by \(\langle g_i\rangle^\vee \cong \Z/r_i\Z\), which we denote by \(\overline{\deg}_i\).
Write \((\C[V], \Z/r_i\Z, \overline{\deg}_i)\) for the ring \(\C[V]\) graded by \(\Z/r_i\Z\) via \(\overline{\deg}_i\).
We directly obtain:

\begin{lemma}
  \label{lem:grad1}
  With the above notation, if \(f\in \C[V]\) is \(\deg_i\)\=/homogeneous, then \(f\) is \(\overline{\deg}_i\)\=/homogeneous as well and we have \[\deg_i(f)\equiv \overline{\deg}_i(f) \mod r_i\;.\]
  In particular, there is a graded morphism \[(\C[V], \Z,\deg_i) \to (\C[V], \Z/r_i\Z,\overline{\deg}_i)\] given by the identity on the rings and by the projection \(\Z\to\Z/r_i\Z\) on the grading groups.
\end{lemma}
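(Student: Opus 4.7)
The plan is to reduce everything to a monomial computation in an eigenbasis of $g_i$, where both gradings can be read off directly from the action of $g_i$ on the coordinate functions.

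First, I would fix a basis $v_1, \ldots, v_n$ of $V$ with $g_i v_j = \zeta_{r_i}^{a_{i,j}} v_j$ and let $x_1, \ldots, x_n \in V^*$ be the dual basis, so that $\C[V] \cong \C[x_1, \ldots, x_n]$. By definition of the monomial grading, $\deg_i(x_j) = a_{i,j}$. On the other hand, each $x_j$ is an eigenvector for the induced action of $\langle g_i \rangle$ on $\C[V]$; fixing the identification $\langle g_i\rangle^\vee \cong \Z/r_i\Z$ so that the character through which $g_i$ acts on $x_j$ corresponds to $a_{i,j} \bmod r_i$, this reads $\overline{\deg}_i(x_j) = a_{i,j} \bmod r_i$. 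Both gradings are multiplicative, so for a monomial $x^\alpha = x_1^{\alpha_1} \cdots x_n^{\alpha_n}$ we obtain $\deg_i(x^\alpha) = \sum_j \alpha_j a_{i,j}$ and $\overline{\deg}_i(x^\alpha) = \sum_j \alpha_j a_{i,j} \bmod r_i$, so the two degrees agree modulo $r_i$ on every monomial.

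Next, if $f \in \C[V]$ is $\deg_i$-homogeneous of degree $d$, I expand $f$ as a $\C$-linear combination of monomials $x^\alpha$; all contributing $\alpha$ satisfy $\sum_j \alpha_j a_{i,j} = d$, hence every such monomial has $\overline{\deg}_i$-degree $d \bmod r_i$. Therefore $f$ lies in a single $\overline{\deg}_i$-component, and $\overline{\deg}_i(f) \equiv \deg_i(f) \bmod r_i$. The in-particular statement then follows at once: the identity on rings together with the projection $\Z \twoheadrightarrow \Z/r_i\Z$ on grading groups maps each $\deg_i$-homogeneous component into the $\overline{\deg}_i$-component indexed by its image, which is exactly the definition of a graded ring morphism.

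The only genuine subtlety -- and really the main obstacle -- is bookkeeping the sign conventions when identifying $\langle g_i\rangle^\vee$ with $\Z/r_i\Z$, because the $\langle g_i\rangle$-action on $V^*$ is by the inverse eigenvalues of the action on $V$. This is not a mathematical obstruction but a matter of fixing the isomorphism $\langle g_i\rangle^\vee \cong \Z/r_i\Z$ compatibly with the chosen root of unity $\zeta_{r_i}$; once this choice is made consistently, the computation above goes through verbatim and there is no further content to the lemma.
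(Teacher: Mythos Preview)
Your proposal is correct and is exactly the computation the paper has in mind: the paper does not give a proof at all but simply writes ``We directly obtain:'' before the lemma, treating it as an immediate consequence of the definitions of \(\deg_i\) and \(\overline{\deg}_i\) in an eigenbasis of \(g_i\). Your monomial-by-monomial verification is precisely this unpacking, and your remark on the sign convention in identifying \(\langle g_i\rangle^\vee \cong \Z/r_i\Z\) is the only genuine detail one has to be careful about.
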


Write \(g_i.f\) for the linear action of \(g_i\) on \(f\in \C[V]\).
Observe that for every \(1\leq i \leq m\) we have an action of \(g_i\) on \(\C[V]^{[G,G]}\).
Indeed, for any \(f\in \C[V]^{[G,G]}\) and \(h\in [G,G]\), we have \[h.(g_i.f) = (hg_i).f = (hg_i).((g_i^{-1}h^{-1}g_i h).f) = g_i.(h.f) = g_i.f\,,\] so \(g_i.f\in \C[V]^{[G,G]}\) as required.
Hence the grading by \(\langle g_i\rangle^\vee\) descends to \(\C[V]^{[G,G]}\).
As the actions of the elements \(g_1,\dots, g_m\) on \(\C[V]^{[G,G]}\) commute, we can consider all the induced gradings at the same time and hence obtain a grading by the group \(\Z/r_1\Z\times\cdots\times \Z/r_m\Z\) on \(\C[V]^{[G,G]}\).

The \(g_i\) do not commute with each other in general, so we cannot decompose their actions on \(\C[V]\) into a common eigenbasis.
Hence, we \emph{cannot} put the above gradings together to obtain a grading by \(\Z^m\) or \(\Z/r_1\Z\times\cdots\times\Z/r_m\Z\) on \(\C[V]\) as there are in general no polynomials which are homogeneous with respect to all gradings at the same time.

Let \(H\leq G\) be the subgroup of \(G\) generated by the junior elements contained in \(G\).
In general, the representatives \(g_1,\dots, g_m\) do not suffice to generate \(H\).
Let \[\overline H \coloneqq H/(H\cap [G,G])\leq \Ab(G)\] and notice that this group is generated by the residue classes \(\overline g_1,\dots, \overline g_m\) modulo \([G,G]\).
This gives a map \[\langle g_1\rangle\times\cdots\times \langle g_m\rangle \to \Ab(G)\;,\] which is surjective onto \(\overline H\).
This surjection corresponds to an embedding of character groups \(\overline H^\vee\to \Z/r_1\Z\times\cdots\times\Z/r_m\Z\).
Further, the inclusion \(\overline H \to \Ab(G)\) induces a projection of characters \(\Ab(G)^\vee \to \overline H^\vee\) by restriction.
We conclude:

\begin{lemma}
  \label{lem:grad2}
  The gradings on \(\C[V]^{[G,G]}\) coming from the actions of the groups \(\Ab(G)\), \(\overline H\) and \(\langle g_1\rangle\times\cdots\times \langle g_m\rangle\) are compatible in the sense that there is a graded morphism \[(\C[V]^{[G,G]}, \Ab(G)^\vee)\to (\C[V]^{[G,G]}, \Z/r_1\Z\times\cdots\times\Z/r_m\Z)\] which factors through \((\C[V]^{[G,G]}, \overline H^\vee)\).
\end{lemma}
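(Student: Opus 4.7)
The plan is to dualise the tower of group homomorphisms
$$P \coloneqq \langle g_1\rangle \times \cdots \times \langle g_m\rangle \twoheadrightarrow \overline H \hookrightarrow \Ab(G)$$
that was just constructed, and to check the grading compatibility component-wise. Applying $\Hom(-,\C^\times)$ (a contravariant functor which sends surjections to injections, and which sends inclusions of subgroups of finite abelian groups to surjections, because every character of a subgroup extends) yields
$$\Ab(G)^\vee \twoheadrightarrow \overline H^\vee \hookrightarrow P^\vee \cong \Z/r_1\Z \times \cdots \times \Z/r_m\Z,$$
where the last identification uses the same primitive roots of unity $\zeta_{r_i}$ that enter the definition of $\overline{\deg}_i$. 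Call the composite $\alpha : \Ab(G)^\vee \to \Z/r_1\Z \times \cdots \times \Z/r_m\Z$; concretely, $\alpha(\chi) = (\chi(\overline g_1),\dots,\chi(\overline g_m))$.

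The graded morphism I would take is the identity on the ring $\C[V]^{[G,G]}$ together with $\alpha$ on the grading groups. To verify the grading condition, let $0\neq f \in \C[V]^{[G,G]}_\chi$, so that $\gamma . f = \chi(\gamma) f$ for all $\gamma \in \Ab(G)$. Since $[G,G]$ acts trivially on $\C[V]^{[G,G]}$, the action of each $g_i \in G$ on $\C[V]^{[G,G]}$ factors through its class $\overline g_i \in \Ab(G)$, and hence $g_i . f = \chi(\overline g_i) f$ for every $i$. This says exactly that $f$ is homogeneous of degree $\alpha(\chi)$ with respect to the $P^\vee$-grading, as required. Running the same argument with $\Ab(G)$ replaced by $\overline H$ (via the character $\chi|_{\overline H} \in \overline H^\vee$) produces an intermediate graded morphism $(\C[V]^{[G,G]}, \Ab(G)^\vee) \to (\C[V]^{[G,G]}, \overline H^\vee)$; composing with the analogous morphism $(\C[V]^{[G,G]}, \overline H^\vee) \to (\C[V]^{[G,G]}, P^\vee)$ recovers the one above, giving the asserted factorisation.

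There is no real mathematical obstacle here; the content is essentially Pontryagin duality applied to the tower of groups already in hand. The only point that needs a little care is bookkeeping with roots of unity: I would be explicit that the identifications $\langle g_i\rangle^\vee \cong \Z/r_i\Z$ use the very same choices fixed at the start of the section, so that the $\Z/r_1\Z \times \cdots \times \Z/r_m\Z$-grading appearing in the statement agrees on the nose with the product of the $\overline{\deg}_i$-gradings, and so that $\alpha$ is well defined.
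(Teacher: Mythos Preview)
Your proposal is correct and follows exactly the approach the paper intends: the paper does not give a separate proof of this lemma but simply writes ``We conclude:'' after setting up the surjection \(\langle g_1\rangle\times\cdots\times\langle g_m\rangle\twoheadrightarrow\overline H\) and the inclusion \(\overline H\hookrightarrow\Ab(G)\), together with their dual maps on characters. Your argument spells out precisely these dualisations and the verification that the identity on \(\C[V]^{[G,G]}\) respects the resulting maps of grading groups, which is what the paper leaves implicit.
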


We state for later reference:
\begin{lemma}
  \label{lem:hbar}
  We have \(\Ab(G/H) \cong \Ab(G)/\overline H\) and \(\overline H^\vee \cong \Ab(G)^\vee/\!\Ab(G/H)^\vee\).
\end{lemma}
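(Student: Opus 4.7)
The plan is to unwind both statements into the same concrete quotient of $G$, and then deduce the second one from the first by applying $\Hom(-,\C^\times)$ to a short exact sequence of finite abelian groups.

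First, I would tackle the isomorphism $\Ab(G/H) \cong \Ab(G)/\overline H$. The commutator subgroup of $G/H$ is the image of $[G,G]$ under the projection $G\to G/H$, which equals $H[G,G]/H$. Using the third isomorphism theorem this yields
\[ \Ab(G/H) \;=\; (G/H)\big/(H[G,G]/H) \;\cong\; G/(H[G,G])\,. \]
On the other hand, by the second isomorphism theorem the inclusion $H\hookrightarrow G$ induces an isomorphism
\[ \overline H = H/(H\cap [G,G]) \;\xrightarrow{\ \sim\ }\; H[G,G]/[G,G]\,, \]
so that $\overline H$ is realised as a subgroup of $\Ab(G)=G/[G,G]$. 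Quotienting gives
\[ \Ab(G)/\overline H \;\cong\; \bigl(G/[G,G]\bigr)\big/\bigl(H[G,G]/[G,G]\bigr)\;\cong\;G/(H[G,G])\,. \]
Both computations land on the same group $G/(H[G,G])$, yielding the first asserted isomorphism.

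For the second statement, I combine this identification with the obvious inclusion to obtain a short exact sequence of finite abelian groups
\[ 0 \longrightarrow \overline H \longrightarrow \Ab(G) \longrightarrow \Ab(G/H) \longrightarrow 0\,. \]
Because $\C^\times$ is a divisible abelian group, it is injective as a $\Z$\=/module, so the contravariant functor $\Hom(-,\C^\times)$ is exact on this category. Applying it produces the short exact sequence
\[ 0 \longrightarrow \Ab(G/H)^\vee \longrightarrow \Ab(G)^\vee \longrightarrow \overline H^\vee \longrightarrow 0\,, \]
which immediately gives $\overline H^\vee \cong \Ab(G)^\vee/\Ab(G/H)^\vee$, as required.

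There is no real obstacle here; the only point to handle with care is the identification of the abstractly defined group $\overline H = H/(H\cap[G,G])$ with the subgroup $H[G,G]/[G,G]$ of $\Ab(G)$, which is an application of the second isomorphism theorem and is needed in order for the quotient $\Ab(G)/\overline H$ to even make sense.
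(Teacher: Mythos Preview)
Your proof is correct and follows essentially the same approach as the paper: reduce both sides of the first isomorphism to $G/(H[G,G])$ via the isomorphism theorems, then dualize. Your treatment is in fact more careful than the paper's terse ``follows directly as $^\vee$ is a contravariant functor'' for the second statement; your observation that exactness of $\Hom(-,\C^\times)$ is what is actually needed (via divisibility of $\C^\times$) is the right justification.
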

\begin{proof}
  For the first statement, we note that the image of \([G,G]\) under the projection \(G\to G/H\) is \([G/H, G/H]\).
  Hence, \[\Ab(G/H) \cong (G/H)/([G,G]/[G,G]\cap H) \cong G/(H[G,G])\] and an application of the isomorphism theorems gives the claim.
  The second statement follows directly as \(^\vee\) is a contravariant functor.
\end{proof}

The following three lemmas are key ingredients for our theorem on \(\Cl(X)\).
\begin{lemma}
  \label{lem:clgrp1}
  Let \(f\in \C[V]^{[G,G]}\) be \(\Ab(G)^\vee\)\=/homogeneous.
  For every index \(i\in\{1,\dots, m\}\), we have \(v_i(f) \equiv \overline{\deg}_i(f)\ \operatorname{mod}\ r_i\).
\end{lemma}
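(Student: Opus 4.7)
The plan is to deduce the congruence by unpacking the definitions of $v_i$ and $\overline{\deg}_i$ in an eigenbasis of $g_i$, using Lemmas~\ref{lem:grad1} and~\ref{lem:grad2} as the bridge between the $\Z$-grading $\deg_i$ (from which $v_i$ is read off) and the $\Z/r_i\Z$-grading $\overline{\deg}_i$ (which $\Ab(G)^\vee$-homogeneity descends to).

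First I would fix an eigenbasis $x_1, \dots, x_n$ of $g_i$ on $V^\ast$, identify $\C[V] \cong \C[x_1, \dots, x_n]$, and recall that by construction $g_i.x_j = \zeta_{r_i}^{a_{i,j}} x_j$, so that on a monomial $x^\alpha = x_1^{\alpha_1}\cdots x_n^{\alpha_n}$ one has $\deg_i(x^\alpha) = \sum_{j} \alpha_j a_{i,j}$ and hence, by the very definition of the monomial valuation, $v_i(x^\alpha) = \deg_i(x^\alpha)$. Consequently, for any $f \in \C[V]$, writing $f = \sum_{d \in \Z_{\geq 0}} f_d$ as its decomposition into $\deg_i$-homogeneous components, we have the identification $v_i(f) = \min\{d : f_d \neq 0\}$.

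Next I would apply Lemma~\ref{lem:grad2} to conclude that the assumed $\Ab(G)^\vee$-homogeneity of $f$ passes through the graded morphism to $\Z/r_1\Z \times \cdots \times \Z/r_m\Z$, so in particular $f$ is $\overline{\deg}_i$-homogeneous with some character in $\Z/r_i\Z$, which we denote $\overline{\deg}_i(f)$. By Lemma~\ref{lem:grad1}, every $\deg_i$-homogeneous component $f_d$ is automatically $\overline{\deg}_i$-homogeneous with $\overline{\deg}_i(f_d) \equiv d \pmod{r_i}$. Since the sum $\sum_d f_d = f$ is $\overline{\deg}_i$-homogeneous of character $\overline{\deg}_i(f)$, each nonzero component must already carry that same character; that is, every $d$ with $f_d \neq 0$ satisfies $d \equiv \overline{\deg}_i(f) \pmod{r_i}$. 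Taking the minimum of such $d$ then yields $v_i(f) \equiv \overline{\deg}_i(f) \pmod{r_i}$, which is the claim.

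I do not expect a real obstacle here; the whole argument is a bookkeeping exercise. The one point requiring care is to keep the $\Z$-grading $\deg_i$ on $\C[V]$ (which is \emph{not} a grading on the subring $\C[V]^{[G,G]}$, as $[G,G]$ need not commute with $g_i$) conceptually separate from the $\Z/r_i\Z$-grading $\overline{\deg}_i$ (which \emph{does} descend to $\C[V]^{[G,G]}$). The decomposition $f = \sum f_d$ is performed inside the ambient ring $\C[V]$, while $\overline{\deg}_i$-homogeneity is the hypothesis one has on $f$ as an element of $\C[V]^{[G,G]}$; combining the two via Lemma~\ref{lem:grad1} is exactly what forces all occurring $\deg_i$-degrees to lie in a single residue class modulo $r_i$.
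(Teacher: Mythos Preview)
Your proposal is correct and follows essentially the same route as the paper: use Lemma~\ref{lem:grad2} to obtain $\overline{\deg}_i$-homogeneity of $f$, decompose $f$ inside $\C[V]$ into its $\deg_i$-homogeneous pieces, and apply Lemma~\ref{lem:grad1} to see that all occurring $\deg_i$-degrees lie in the single residue class $\overline{\deg}_i(f)$ modulo $r_i$, so in particular the minimal one $v_i(f)$ does. Your closing remark about keeping the ambient $\Z$-grading on $\C[V]$ separate from the $\Z/r_i\Z$-grading on $\C[V]^{[G,G]}$ is exactly the care the argument requires.
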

\begin{proof}
  Let \(f\in \C[V]^{[G,G]}\) be \(\Ab(G)^\vee\)\=/homogeneous.
  Fix an \(i\in\{1,\dots, m\}\).
  Then \(f\) is \(\overline{\deg}_i\)\=/homogeneous by Lemma~\ref{lem:grad2}.
  By Lemma~\ref{lem:grad1}, there exist \(\deg_i\)- and \(\overline{\deg}_i\)\=/homogeneous elements \(f_{i,j}\in \C[V]\) such that \(f = \sum_j f_{i, j}\) and \(\deg_i(f_{i, j}) < \deg_i(f_{i, j'})\) whenever \(j < j'\).
  In particular, we have \(\deg_i(f_{i, 1}) = v_i(f)\) and \(\overline{\deg}_i(f_{i, 1}) = \overline{\deg}_i(f)\).
  Hence, we conclude \[v_i(f) \equiv \overline{\deg}_i(f_{i, 1}) = \overline{\deg}_i(f)\ \operatorname{mod}\ r_i\] by Lemma~\ref{lem:grad1}.
\end{proof}

\begin{lemma}
  \label{lem:clgrp2}
  Let \(f\in \C[V]^{[G,G]}\) be \(\Ab(G)^\vee\)\=/homogeneous.
  We have \(r_i\mid v_i(f)\) for all \(i\in\{1,\dots, m\}\) if and only if \(f\in \C[V]^H\), where \(H\leq G\) is the subgroup generated by the junior elements contained in \(G\).
\end{lemma}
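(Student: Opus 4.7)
The plan is to translate the divisibility condition $r_i \mid v_i(f)$ into invariance of $f$ under the action of $g_i$, and then leverage the $\Ab(G)^\vee$\=/homogeneity to propagate invariance under $g_1,\dots,g_m$ to invariance under all of $H$. The bridge between divisibility of valuations and invariance is provided directly by Lemma~\ref{lem:clgrp1}: since the grading $\overline{\deg}_i$ was defined so that a $\overline{\deg}_i$\=/homogeneous element of degree $d\in\Z/r_i\Z$ satisfies $g_i.f = \zeta_{r_i}^d f$, the statement $r_i \mid v_i(f)$ is equivalent (via the congruence $v_i(f) \equiv \overline{\deg}_i(f) \bmod r_i$) to $g_i.f = f$.

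For the easy direction, I would assume $f\in\C[V]^H$. Since $g_i\in H$ for every $i$, we have $g_i.f = f$, so $\overline{\deg}_i(f) = 0$ in $\Z/r_i\Z$, and Lemma~\ref{lem:clgrp1} then yields $r_i\mid v_i(f)$.

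For the converse, I would assume $r_i \mid v_i(f)$ for all $i$. Lemma~\ref{lem:clgrp1} gives $\overline{\deg}_i(f) = 0$ for every $i$, hence $g_i.f = f$. Because $f$ is $\Ab(G)^\vee$\=/homogeneous, there exists a character $\chi\in \Ab(G)^\vee$ with $g.f = \chi(\overline g)\,f$ for all $g\in G$, where $\overline g$ denotes the residue class in $\Ab(G)$. So we know $\chi(\overline{g_i}) = 1$ for each $i$. The key observation -- which is where the main subtlety lies -- is that $\chi$ factors through the abelianization and is therefore a class function on $G$, so $\chi$ vanishes on every conjugate of each $g_i$. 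Since the $g_1,\dots, g_m$ are representatives of all junior conjugacy classes, $\chi$ is trivial on every junior element of $G$, and hence trivial on the subgroup $H$ they generate. This means $h.f = f$ for all $h\in H$, i.e.\ $f\in \C[V]^H$.

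The main obstacle I anticipate is precisely this passage from the generating set $\{g_1,\dots,g_m\}$ (which need not generate $H$) to all of $H$; the direct invariance argument using only $g_i.f = f$ is insufficient, and it is the homogeneity hypothesis that makes the argument go through, via the class\-/function property of characters of $\Ab(G)$.
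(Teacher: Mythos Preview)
Your proof is correct and follows essentially the same route as the paper: both use Lemma~\ref{lem:clgrp1} to translate \(r_i\mid v_i(f)\) into \(g_i.f=f\), and then pass from the representatives \(g_1,\dots,g_m\) to all junior elements via the fact that the action of \(G\) on an \(\Ab(G)^\vee\)\=/homogeneous \(f\) factors through \(\Ab(G)\). The only cosmetic difference is that you phrase this last step through the class\-/function property of the character \(\chi\), whereas the paper invokes \([G,G]\)\=/invariance directly; these are the same observation.
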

\begin{proof}
  By Lemma \ref{lem:clgrp1}, we have \(v_i(f) \equiv \overline{\deg}_i(f)\) mod \(r_i\) for every \(i\).
  Therefore, \(r_i \mid v_i(f)\) is equivalent to \(\overline{\deg_i}(f) = 0\) for every \(i\).
  Equivalently, every \(g_i\) acts trivially on \(f\).
  Since \(f\) is furthermore \([G,G]\)\=/invariant, we conclude that this is the case if and only if every junior element in \(G\) leaves \(f\) invariant and hence \(f \in \C[V]^H\).
\end{proof}

\begin{lemma}
  \label{lem:clgrp3}
  Let \([D]\in \Cl(V/G)\) be a class of divisors.
  There exists a homogeneous element in \(\C[V]^{[G,G]}\) of degree \(\chi_{[D]}\).
\end{lemma}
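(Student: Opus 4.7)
The plan is to reduce the claim to producing, for each character $\chi \coloneqq \chi_{[D]} : G \to \C^\times$, a nonzero $\chi$-semi-invariant in $\C[V]$ for the $G$-action, and then to construct such a semi-invariant via Hilbert's Theorem~90 followed by a norm trick to clear denominators.

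The reduction is straightforward: unpacking the definition of the $\Ab(G)^\vee$-grading on $\C[V]^{[G,G]}$, a polynomial $f \in \C[V]^{[G,G]}$ lies in the $\chi$-graded piece precisely when $g.f = \chi(g) f$ for every $g \in G$ (with $\chi$ pulled back through the projection $G \twoheadrightarrow \Ab(G)$), and any such $f$ is automatically $[G,G]$-invariant because $\chi|_{[G,G]}=1$. So it suffices to show that the space of $\chi$-semi-invariants of $G$ acting on $\C[V]$ is nonzero.

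For the construction I would use that $G \leq \GL(V)$ acts faithfully on $V$, so the extension $\C(V)/\C(V)^G$ is Galois with group $G$ and Hilbert's Theorem~90 gives $H^1(G, \C(V)^\times)=0$. Viewed as a map $G \to \C^\times \subseteq \C(V)^\times$, the character $\chi$ is a 1-cocycle (because $G$ acts trivially on $\C^\times$), hence a coboundary: there exists $f \in \C(V)^\times$ with $g.f = \chi(g) f$ for all $g \in G$. Writing $f = p/q$ with $p,q\in\C[V]$ and setting $N \coloneqq \prod_{g\in G} g.q \in \C[V]^G$, the product $f \cdot N = p \cdot \prod_{g \neq 1} g.q$ is a nonzero element of $\C[V]$ still satisfying $g.(f\cdot N) = \chi(g)(f\cdot N)$, and thus yields the required homogeneous element of degree $\chi_{[D]}$. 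The only mild subtlety is that Hilbert~90 produces the semi-invariant first in $\C(V)$ rather than directly in $\C[V]$; the norm trick resolves this cleanly, so I do not expect any genuine obstacle.
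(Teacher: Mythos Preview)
Your argument is correct. The reduction to finding a nonzero $\chi$-semi-invariant in $\C[V]$ is exactly right, Hilbert~90 applies because $\C(V)/\C(V)^G$ is Galois with group $G$ (faithfulness being automatic for $G\leq\GL(V)$), and the norm trick cleanly converts the rational semi-invariant into a polynomial one.

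The paper, by contrast, does not argue this out: it simply observes that the claim amounts to the nonvanishing of the relative invariants for each linear character of $\Ab(G)$ acting on $\C[V]^{[G,G]}$ and cites \cite[Lemma~2.1]{Nak82}. So your route is genuinely different: you give a self-contained Galois-cohomological construction, whereas the paper defers to an external reference. The advantage of your approach is that it is elementary and transparent, requiring nothing beyond Hilbert~90 and a one-line denominator-clearing step; the advantage of the citation is brevity and that Nakajima's lemma is stated in the precise language of relative invariants already used elsewhere in the paper. Either way the content is the same fact, and your version would serve perfectly well as an inline proof in place of the reference.
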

\begin{proof}
  This is saying that the relative invariants with respect to the linear characters of \(\Ab(G)\) on \(\C[V]^{[G,G]}\) are non\-/empty which holds by \cite[Lemma~2.1]{Nak82}.
\end{proof}
Notice that the lemma also implies that we can find an effective divisor in any class of divisors in \(\Cl(V/G)\).

\section{The class group}
\label{sec:clgrp}

We are now prepared for our theorem.
\begin{theorem}
  \label{thm:clgrp}
  Let \(G\leq \SL(V)\) be a finite group and let \(H\leq G\) be the subgroup generated by the junior elements contained in \(G\).
  Let \(\phi: X\to V/G\) be a \(\Q\)\=/factorial terminalization of \(V/G\).
  Then we have a canonical isomorphism of abelian groups \[\Cl(X)^\tors \cong \Ab(G/H)^\vee = \Hom(G/H, \C^\times)\;,\] which is induced by the push\-/forward map \(\phi_\ast: \Cl(X)\to \Cl(V/G)\).
\end{theorem}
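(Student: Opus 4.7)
The plan is to show that the push\-/forward $\phi_\ast$ restricts to an injection $\Cl(X)^{\tors}\hookrightarrow \Cl(V/G)\cong \Ab(G)^\vee$ whose image is precisely the subgroup $\Ab(G/H)^\vee$, where $\Ab(G/H)^\vee$ sits inside $\Ab(G)^\vee$ as the characters trivial on $\overline H$ via the dual of the surjection $\Ab(G)\twoheadrightarrow \Ab(G/H)$ from Lemma~\ref{lem:hbar}.

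Injectivity is immediate from Corollary~\ref{cor:ses}: the kernel $\bigoplus_i \Z E_i$ of $\phi_\ast$ is free abelian and hence intersects $\Cl(X)^{\tors}$ trivially. For the image, fix $[D]\in \Cl(V/G)$, pick an effective representative $D$ by Lemma~\ref{lem:clgrp3}, and let $f\in \C[V]^{[G,G]}_{\chi_{[D]}}$ be a canonical section. I would characterise when $[D]$ admits a torsion lift in two steps. First, since any two lifts of $[D]$ differ by an element of $\bigoplus_i\Z[E_i]$, and since $\rho([E_1]),\dots,\rho([E_m])$ are $\Z$\=/linearly independent in the rank\-/$m$ group $\Cl(X)^{\free}$ (Remark~\ref{rem:free} together with the embedding noted in the proof of Lemma~\ref{lem:theta}), a torsion lift of $[D]$ exists if and only if $\rho([\overline D])$ lies in the $\Z$\=/span of $\rho([E_1]),\dots,\rho([E_m])$. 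Combining this with the identity
\[\rho([\overline D]) = -\sum_{i=1}^m \frac{1}{r_i}v_i(f)\,\rho([E_i])\]
from Proposition~\ref{prop:divfree}, and viewing both sides in $\Cl(X)^{\free}\otimes\Q$ where the $\rho([E_i])$ form a basis, the existence of a torsion lift is equivalent to $r_i\mid v_i(f)$ for every $i$.

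Second, by Lemma~\ref{lem:clgrp2}, this divisibility condition is equivalent to $f\in \C[V]^H$. Since $f$ transforms under $\Ab(G)$ by the character $\chi_{[D]}$, the $H$\=/invariance of $f$ is equivalent to $\chi_{[D]}$ being trivial on $\overline H$, i.e.\ to $\chi_{[D]}\in \Ab(G/H)^\vee$ by Lemma~\ref{lem:hbar}. Thus the image of $\phi_\ast|_{\Cl(X)^{\tors}}$ is exactly $\Ab(G/H)^\vee$, yielding the desired canonical isomorphism.

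The step I expect to require the most care is the first bullet above: one has to carefully distinguish statements living in $\Cl(X)$, in $\Cl(X)^{\free}$, and in $\Cl(X)^{\free}\otimes\Q$, and in particular justify that the $\Q$\=/coefficients appearing in Proposition~\ref{prop:divfree} can be tested for integrality against the $\Z$\=/linearly independent family $\{\rho([E_i])\}$. Everything else — the existence of effective representatives, the translation between canonical sections and divisors, and the compatibility of the monomial valuations $v_i$ with the $\Ab(G)^\vee$\=/grading — has been packaged in exactly the right form by the lemmas of Sections~\ref{sec:corr} and~\ref{sec:grad}.
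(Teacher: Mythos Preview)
Your proposal is correct and follows essentially the same approach as the paper: both arguments use the short exact sequence of Corollary~\ref{cor:ses} for injectivity, then combine Proposition~\ref{prop:divfree} with the $\Z$\=/linear independence of the $\rho([E_i])$ to reduce the image computation to the divisibility condition $r_i\mid v_i(f)$, and finally invoke Lemma~\ref{lem:clgrp2} to translate this into $\chi_{[D]}\in\Ab(G/H)^\vee$. The only difference is cosmetic: the paper treats the inclusion $\phi_\ast(\Cl(X)^\tors)\subseteq \Ab(G/H)^\vee$ and surjectivity as two separate paragraphs, whereas you package both directions into a single ``$[D]$ admits a torsion lift iff $r_i\mid v_i(f)$ for all $i$'' characterisation, which is arguably cleaner.
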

\begin{proof}
  For ease of notation, we identify \(\Cl(V/G)\) with \(\Ab(G)^\vee\) via Theorem~\ref{thm:clgrpVG} and use both groups synonymously.
  Notice that \(\Ab(G/H)^\vee\) is the subgroup of \(\Ab(G)^\vee\) consisting of those characters which take value 1 on every junior element.
  We claim that restricting \(\phi_\ast\) to \(\Cl(X)^\tors\) induces a bijection onto \(\Ab(G/H)^\vee\).

  We first show that we indeed have \(\phi_\ast(\Cl(X)^\tors) \subseteq \Ab(G/H)^\vee\).
  Let \(D\in\Div(X)\) be a divisor on \(X\).
  By Lemma~\ref{lem:clgrp3}, there is \(f\in \C[V]^{[G,G]}\) of degree \(\chi_{[\phi_\ast D]}\) and we have the effective divisor \(D' \coloneqq \div_{[\phi_\ast D]}(f)\) on \(V/G\) with \([D'] = [\phi_\ast D]\).
  Write \(\overline{D'}\in\Div(X)\) for the strict transform of \(D'\) via \(\phi\).
  Then \(\phi_\ast \overline{D'} = D'\), hence by Proposition~\ref{prop:ses} we have \begin{align}[\overline{D'}] = [D] + \sum_{i = 1}^m a_i [E_i]\;,\label{eq:clgrp1}\end{align} with \(a_i\in \Z\) and where \(E_1,\dots, E_m\in \Div(X)\) are the irreducible components of the exceptional divisor of \(\phi\).
  As before let \(\rho:\Cl(X) \to \Cl(X)^\mathrm{free}\coloneqq \Cl(X)/\Cl(X)^\tors\) be the canonical projection.
  Applying \(\rho\) on both sides of \eqref{eq:clgrp1} and using Proposition~\ref{prop:divfree} yields \begin{align}\rho([D]) = -\sum_{i = 1}^m\frac{1}{r_i}v_i(f)\rho([E_i]) - \sum_{i = 1}^ma_i \rho([E_i])\;.\label{eq:clgrp2}\end{align}

  Assume now \([D]\in\Cl(X)^\tors\).
  Then \(\rho([D]) = 0\) and we conclude by \eqref{eq:clgrp2} that \(v_i(f) = -r_i a_i\) for all \(i\) and, in particular, \(r_i\mid v_i(f)\).
  Hence, \(f\in \C[V]^H\) by Lemma~\ref{lem:clgrp2} and therefore we can identify \([D'] = [\phi_\ast D]\), or more precisely \(\chi_{[\phi_\ast D]}\), with an element of \(\Hom(G/H, \C^\times)\).
  This means that we obtain a well\-/defined map \[\psi: \Cl(X)^\tors \to \Hom(G/H, \C^\times),~[D]\mapsto[\phi_\ast D]\] by restricting \(\phi_\ast\) to \(\Cl(X)^\tors\).

  We now prove that \(\psi\) is bijective.
  For injectivity, notice that the morphism of groups \[\theta: \Cl(X)\to \Cl(V/G)\oplus \Cl(X)^\mathrm{free},\ [D]\mapsto ([\phi_\ast D], \rho([D]))\] is injective.
  This follows from the exactness of the sequence in Proposition~\ref{prop:ses} noticing that the group \(\bigoplus_{i = 1}^m\Z E_i\) embeds into \(\Cl(X)^\mathrm{free}\), see also \cite[Lemma~4.1.4]{Gra19}.
  The injectivity of \(\theta\) implies the injectivity of \(\psi\): if we have \(\psi([D]) = \psi([D'])\) for \([D], [D']\in\Cl(X)^\tors\), then \(\theta([D]) = \theta([D'])\) as by construction \(\rho([D]) = 0 = \rho([D'])\).

  Now let \(\chi\in \Hom(G/H, \C^\times)\) be a character, which we identify with a class of divisors \([D]\in \Cl(V/G)\).
  By Lemma~\ref{lem:clgrp3}, there exists \(0\neq f\in \C[V]_\chi^{[G,G]}\) and we may assume without loss of generality that \(D\in \Div(V/G)\) is effective and \(f\) is the canonical section of \(D\) as in Proposition~\ref{prop:Ddivcorr}.
  By the assumption on \(\chi\), we have \(\frac{1}{r_i}v_i(f) \in \Z\) for all \(i\) by Lemma~\ref{lem:clgrp2}.
  Let \[E\coloneqq -\sum_{i = 1}^m\frac{1}{r_i}v_i(f)E_i\in\Div(X)\] and set \(D' \coloneqq \overline D - E\), where \(\overline D \coloneqq \phi_\ast^{-1}(D)\) is the strict transform of \(D\) via \(\phi\).
  By Proposition~\ref{prop:ses}, we have \([E]\in \ker(\phi_\ast)\) and therefore \([\phi_\ast D'] = [\phi_\ast \overline D] = [D]\).
  Using Proposition~\ref{prop:divfree}, we have \(\rho([\overline D]) = \rho([E])\), hence \(\rho([D']) = 0\) and \([D']\in\Cl(X)^\tors\).
  We conclude \(\psi([D']) = [D]\) and \(\psi\) is surjective.
\end{proof}

Combining Theorem~\ref{thm:mckay}, see Remark~\ref{rem:free}, and Theorem~\ref{thm:clgrp} enables us to describe the class group of \(X\) in general.
\begin{corollary}
  \label{cor:clgrpfull}
  Let \(G\leq \SL(V)\) be a finite group and let \(H\leq G\) be the subgroup generated by the junior elements contained in \(G\).
  Let \(\phi: X\to V/G\) be a \(\Q\)\=/factorial terminalization of \(V/G\).
  Then we have \[\Cl(X) \cong \Z^m \oplus \Ab(G/H)^\vee,\] where \(m\) is the number of junior conjugacy classes in \(G\).
  Further, the canonical embedding \(\iota : \bigoplus_{i = 1}^m\Z E_i \to \Cl(X)^\free\) satisfies \(\coker(\iota) = \overline H^\vee\) with \(\overline H \coloneqq H/(H\cap [G,G])\) as above.
\end{corollary}
\begin{proof}
  The first part follows directly from the mentioned theorems.
  For the second part, we combine Proposition~\ref{prop:ses} and the first part to obtain \(\Ab(G)^\vee \cong \coker(\iota) \oplus \Ab(G/H)^\vee\) and then the claim follows by Lemma~\ref{lem:hbar}.
\end{proof}

\begin{remark}
  As the isomorphism in Theorem~\ref{thm:clgrp} is induced by \(\phi_\ast\), we can see the sequence in Proposition~\ref{prop:ses} as the direct sum of the short exact sequences \[\begin{tikzcd} 0 \arrow{r} & \bigoplus_{i = 1}^m\Z E_i \arrow{r} & \Cl(X)^\free \arrow{r} & \overline H^\vee \arrow{r} & 0\end{tikzcd}\] and \[\begin{tikzcd} 0 \arrow{r} & 0 \arrow{r} & \Cl(X)^\tors \arrow{r} & \Ab(G/H)^\vee \arrow{r} & 0\;.\end{tikzcd}\]
\end{remark}

We obtain \cite[Proposition~4.14]{Yam18} as a further corollary.
\begin{corollary}[Yamagishi]
  \label{cor:clgrp}
  Let \(G\leq \SL(V)\) be a finite group and let \(\phi:X\to V/G\) be a \(\Q\)\=/factorial terminalization of \(V/G\).
  Then the class group \(\Cl(X)\) is free if and only if \(G\) is generated by the junior elements contained in \(G\) together with \([G,G]\).
\end{corollary}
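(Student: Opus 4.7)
The plan is to deduce the corollary directly from Corollary~\ref{cor:clgrpfull}. That result gives \(\Cl(X) \cong \Z^m \oplus \Ab(G/H)^\vee\), where \(H\) is the subgroup of \(G\) generated by the junior elements. Since \(\Z^m\) is torsion-free, \(\Cl(X)\) is free if and only if the finite abelian group \(\Ab(G/H)^\vee\) vanishes. By the elementary self-duality of finite abelian groups already invoked after Theorem~\ref{thm:clgrpVG}, this is equivalent to \(\Ab(G/H)\) itself being trivial.

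It then remains to translate the triviality of \(\Ab(G/H)\) into the stated generation condition on \(G\). I would use that \(\Ab(G/H) = (G/H)/[G/H, G/H]\), together with the fact, already observed in the proof of Lemma~\ref{lem:hbar}, that the image of \([G,G]\) under \(G \to G/H\) equals \([G/H, G/H]\), i.e., \([G/H, G/H] = [G,G] H / H\). Hence \(\Ab(G/H) = 1\) is equivalent to \(G/H = [G,G]H/H\), which in turn is equivalent to \(G = [G,G] \cdot H\). Since \(H\) is by definition generated by the junior elements of \(G\), this last equality is exactly the stated condition.

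There is no real obstacle here: the substantive content is entirely packaged into Corollary~\ref{cor:clgrpfull}, and what remains is a short, purely formal group-theoretic manipulation. The only point to be careful about is the compatibility of the commutator subgroup with the quotient map \(G \to G/H\), but this is a standard fact already used in Lemma~\ref{lem:hbar}.
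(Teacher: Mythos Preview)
Your proposal is correct and follows exactly the approach the paper intends: the paper states Corollary~\ref{cor:clgrp} without proof, as an immediate consequence of Corollary~\ref{cor:clgrpfull} (equivalently Theorem~\ref{thm:clgrp}), and your argument spells out precisely this deduction, including the group-theoretic identification \(\Ab(G/H)=G/([G,G]H)\) already used in Lemma~\ref{lem:hbar}.
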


\section{Examples and closing remarks}
\label{sec:ex}

\begin{remark}
  \label{rem:clgrp}
  Note that in Corollary \ref{cor:clgrp} we cannot drop the part `together with \([G,G]\)' for the equivalence, that is, there are groups which are not generated by junior elements such that \(\Cl(X)\) is free.
  For example, let \(\mathsf I\leq \SL_2(\C)\) be the binary icosahedral group \cite[Theorem~5.14]{LT09} and set \(G \coloneqq \{\diag(g, g) \mid g\in \mathsf I\}\leq\SL_4(\C)\).
  The abelianization \(\Ab(\mathsf I) = \{1\}\) is trivial, so the same is true for \(\Ab(G)\).
  However, every non\-/trivial element in \(\mathsf I\) is of age 1, hence all non\-/trivial elements of \(G\) are of age 2 and \(G\) does not contain any junior elements.
  Hence, the class group of a \(\Q\)\=/factorial terminalization of \(\C^4/G\) is trivial and therefore free.
  For an example of a non\-/trivially free class group, one considers the direct product of \(G\) with a group generated by junior elements.
\end{remark}

\begin{example}
  \label{ex:reality}
  As a `reality check', let \(G\leq \SL(V)\) be a group which does not contain any junior elements.
  Then \(\age(g) > 1\) for every non\-/trivial \(g\in G\), so \(V/G\) has terminal singularities by \cite[Theorem~3.21]{Kol13}.
  Hence, \(V/G\) is a \(\Q\)\=/factorial terminalization of itself and Corollary~\ref{cor:clgrpfull} gives \(\Cl(V/G) = \Ab(G)^\vee\) as in Theorem~\ref{thm:clgrpVG}.
\end{example}

\begin{example}
  \label{ex:nontriv}
  For a non\-/trivial example, we consider the group \[G \coloneqq \big\langle\!\diag(-1 , -1, -\zeta_3, -\zeta_3^2)\big\rangle\leq\SL_4(\C)\] of order 6, where \(\zeta_3\) is a primitive third root of unity.
  As \(G\) does not contain any reflections, we have \(\Cl(\C^4/G) \cong \Z/6\Z\).

  To determine the age of elements in \(G\), we need to fix a primitive sixth root of unity.
  However, the two possible choices \(-\zeta_3\) and \(-\zeta_3^2\) both result in the same junior elements of \(G\), namely \[g_1 \coloneqq \diag(1, 1, \zeta_3^2, \zeta_3)\text{ and }g_2 \coloneqq \diag(1, 1, \zeta_3, \zeta_3^2)\;.\]
  By the Reid--Tai criterion \cite[Theorem~3.21]{Kol13}, the existence of junior elements in \(G\) implies that \(V/G\) is not terminal.
  As \(G\) is abelian, the conjugacy classes in \(G\) are trivial.
  So, the rank of the free part of the class group \(\Cl(X)\) of a \(\Q\)\=/factorial terminalization \(X\to \C^4/G\) is 2.
  For the torsion part, we determine that \(G/H \cong C_2\) is cyclic of order 2 and we conclude \[\Cl(X) \cong \Z^2 \oplus \Z/2\Z\;.\]

  We write the elements of \(\Cl(X)\) as 3\-/tuples with the first two entries corresponding to the free part and the last entry corresponding to the torsion part.
  Then the push\-/forward morphism \(\Cl(X) \to \Cl(\C^4/G)\) is induced by \[(1, 0, 0) \mapsto g_1,\ (0, 1, 0) \mapsto g_2,\ (0, 0, 1) \mapsto -I_4\;,\] where \(I_4\) denotes the identity matrix.
\end{example}

\appendix
\section{Age revisited}
\label{sec:app}

Recall that for \(g\in G\) the integer \(\age(g)\) depends on a choice of root of unity.
In this appendix, we study this issue in more detail to show that the results in this paper are in fact independent of any choices.

\begin{remark}
  In \cite{IR96}, Ito and Reid avoid making any choices by defining the age not for the group \(G\), but for the set \(\Gamma\coloneqq\Hom(\mu_R, G)\), where \(\mu_R\) is the group of roots of unity of order \(R\) and \(R\) is a common multiple of the orders of the elements of \(G\).
  On \(\Gamma\), the notion of age is independent of any choices.
  Any primitive root of unity \(\zeta\in\C^\times\) of order \(R\) gives a bijection \(\Gamma\to G,\ \phi\mapsto \phi(\zeta)\) and one may endow \(\Gamma\) with a group operation via this map.
  However, for the arguments in this paper we need a notion of age on \(G\); this is quite common, see for example \cite{Rei02}.
\end{remark}

First of all, to be able to speak about junior elements in a uniform way, we introduce the following definition.
Let \(e(G)\) be the exponent of \(G\) and let \(\zeta\in\C^\times\) be a primitive \(e(G)\)\=/th root of unity.
For any \(g\in G\), we have \(g^{e(G)} = \id_V\), so there are \(0\leq a_i' < e(G)\) such that the eigenvalues of \(g\) are given by \(\zeta^{a_i'}\), \(i = 1,\dots, n\), with \(n \coloneqq \dim V\).
If \(r\) is the order of \(g\), we must have \(\frac{e(G)}{r}\mid a_i'\) and set \(a_i \coloneqq \frac{ra_i'}{e(G)}\in \Z\).

\begin{definition}
  With the above notation, we set \(\age_\zeta(g) \coloneqq \frac{1}{r}\sum_{i = 1}^na_i\).
  We call the integers \(a_1,\dots, a_n\) the \emph{weights} of \(g\) with respect to \(\zeta\).
  We call \(g\) a \emph{\(\zeta\)\=/junior element}, if \(\age_\zeta(g) = 1\).
\end{definition}

The integer \(\age_\zeta(g)\) coincides with \(\age(g)\) as constructed above for an appropriate choice of \(\zeta\).

\begin{lemma}
  \label{lem:app1}
  Let \(\zeta,\eta\in\C^\times\) be primitive \(e(G)\)\=/th roots of unity.
  Then there is a bijection (of sets) \(\phi:G\to G\) such that \(\age_{\eta}(\phi(g)) = \age_{\zeta}(g)\) for all \(g\in G\).
  Further, the weights of \(g\in G\) with respect to \(\zeta\) and \(\phi(g)\) with respect to \(\eta\) coincide.
\end{lemma}
\begin{proof}
  By assumption, there is \(a\in \Z_{> 0}\) with \(\eta = \zeta^a\) and \(\gcd(a, e(G)) = 1\).
  Then there is \(b\in \Z_{>0}\) with \(ab\equiv 1\) mod \(e(G)\).
  Hence we have a map \[\phi : G\to G,\ g\mapsto g^a\] with inverse \(g\mapsto g^b\).

  Let \(g\in G\) be an element of order \(r\in\Z_{>0}\).
  There are \(0\leq a_i < r\) such that the eigenvalues of \(g\) are given by \(\zeta^{\frac{e(G)a_i}{r}}\).
  Then the eigenvalues of \(\phi(g)\) are given by \((\zeta^{\frac{e(G)a_i}{r}})^a = \eta^{\frac{e(G)a_i}{r}}\) and \[\age_\eta(\phi(g)) = \frac{1}{r} \sum_{i = 1}^na_i = \age_\zeta(g)\] as claimed.
\end{proof}

\begin{lemma}
  \label{lem:app2}
  Let \(\zeta,\eta\in\C^\times\) be primitive \(e(G)\)\=/th roots of unity and write \(H_\zeta\leq G\), respectively \(H_\eta\leq G\), for the subgroup of \(G\) generated by the \(\zeta\)\=/junior elements, respectively the \(\eta\)\=/junior elements.
  Then we have \(H_\zeta = H_\eta\).
\end{lemma}
\begin{proof}
  Let \(\phi:G\to G\) be the bijection in Lemma~\ref{lem:app1}, so taking powers by some \(a\in\Z_{>0}\).
  If \(g\in H_\zeta\) is a \(\zeta\)\=/junior element, then \(\phi(g) = g^a \in H_\eta\) is an \(\eta\)\=/junior element.
  Further, we clearly have \(g^a\in H_\zeta\), hence \(H_\eta \subseteq H_\zeta\) and an analogous argument gives the reverse inclusion.
\end{proof}

\emergencystretch=1ex
\printbibliography

\end{document}